\numberwithin{equation}{section}
\newcommand{\R}{\mathbb{R}}
\newcommand{\Z}{\mathbb{Z}}
\newcommand{\T}{\mathbb{T}}
\newcommand{\E}{\mathbb{E}}
\newcommand{\F}{\mathcal{F}}
\newcommand{\supp}{\mathrm{supp}\,}
\newcommand{\dist}{\mathrm{dist}}
\newcommand{\pphi}{\varphi}
\newcommand{\eps}{\varepsilon}
\providecommand{\dotdiv}{% Don't redefine it if available
	\mathbin{% We want a binary operation
		\vphantom{+}% The same height as a plus or minus
		\text{% Change size in sub/superscripts
			\mathsurround=0pt % To be on the safe side
			\ooalign{% Superimpose the two symbols
				\noalign{\kern-.35ex}% but the dot is raised a bit
				\hidewidth$\smash{\cdot}$\hidewidth\cr % Dot
				\noalign{\kern.35ex}% Backup for vertical alignment
				$-$\cr % Minus
			}%
		}%
	}%
}
\newtheorem{theorem}{Theorem}
\newtheorem{lemma}{Lemma}[section]
\begin{document}
	\title[LPR inequality for arbitrary Vilenkin systems]{Littlewood--Paley--Rubio de Francia inequality for unbounded Vilenkin systems}
	\author{Anton Tselishchev}
	\address{St. Petersburg Department, Steklov Math. Institute, Fontanka 27, St. Petersburg 191023 Russia}
	\email{celis-anton@yandex.ru}
	\keywords{Vilenkin systems, Fourier multipliers, Littlewood--Paley theory, Rubio de Francia inequality}
	\subjclass[2020]{42C10, 43A75}
	
	\begin{abstract}
		Rubio de Francia proved the one-sided version of Littlewood--Paley inequality for arbitrary intervals. In this paper, we prove the similar inequality in the context of arbitrary Vilenkin systems (that is, for functions on infinite products of cyclic groups). There are no assumptions on the orders of these groups.
	\end{abstract}

	\maketitle
	
	\section{Introduction}
	
	Let $f$ be a function on $\R$. For an arbitrary measurable set $E\subset\R$ we denote by $P_E$ the Fourier multiplier that acts on $f$ in a following way:
	$$
	P_E f = (\mathbbm{1}_E\widehat{f})^\vee.
	$$
	The famous Littlewood--Paley theorem (which was proved in \cite{L-P}; see also for example chapter 6 in \cite{Cl_Graf} or chapter 8 in \cite{MusSchlag}) states that if $I_j=[2^j, 2^{j+1}) \cup (-2^{j+1}, -2^j]$, then for every $p$, $1<p<\infty$, we have
	$$
	\|f\|_{L^p}\asymp \Big\| \Big( \sum_{j\in \Z} |P_{I_j} f|^2 \Big)^{1/2} \Big\|_{L^p}.
	$$
	The notation $A\asymp B$ means that there exist positive constants $c$ and $C$ such that $cA\leq B\leq CA$. We also use the notation $A\lesssim B$ to indicate that $A\leq CB$ for some  positive constant $C$. The constants in the Littlewood--Paley theorem depend on $p$ and of course do not depend on the function $f$.
	
	The sequence $\{2^j\}$ in the theorem may be replaced by any lacunary sequence $\{\lambda_j\}$, that is, $\lambda_j$ should satisfy the condition $\lambda_{j+1}/\lambda_j\ge c > 1$.
	
	In the paper \cite{RdF} Rubio de Francia proved that if $I_j$ are arbitrary disjoint intervals in $\R$ then for any $p$, $2\le p<\infty$, we have 
	\begin{equation}
		\label{RdF_orig}
		\Big\| \Big( \sum_{j} |P_{I_j} f|^2 \Big)^{1/2} \Big\|_{L^p} \lesssim  \|f\|_{L^p}.
	\end{equation}
	These results may be easily transferred to the case of functions on the torus $\T$. In this case, the intervals $I_j$ should be the subsets of $\Z$. By duality, Rubio de Francia's inequality may be reformulated in a following way: if $I_j$ are disjoint intervals in $\Z$ and $f_j$ are trigonometric polynomials such that $\supp \widehat{f}_j \subset I_j$, then there is an estimate
	$$
	\Big\| \sum_j f_j \Big\|_{L^p} \lesssim \Big\| \Big(\sum_j |f_j|^2\Big)^{1/2} \Big\|_{L^p}, \quad 1<p\le 2.
	$$
	In the article \cite{Bour} it was proved by Bourgain that this estimate also holds for $p=1$ and later in \cite{KisLP} Kislyakov and Parilov proved it also for all $p\in (0,2]$. For some other issues related to Rubio de Francia's inequality, see \cite{Lacey}.
	
	The aim of this paper is to prove the analogue of the original Rubio de Francia's inequality \eqref{RdF_orig} for Vilenkin systems instead of exponential functions. In order to formulate our main result, we need to introduce some definitions.
	
	Fix some sequence of positive integers $\{p_j\}_{j=0}^\infty$ such that $p_j\ge 2.$ Vilenkin system corresponding to this sequence is basically the set of characters on the group
	$$
	\prod_{j=0}^\infty \Z_{p_j}.
	$$
	This group can be identified with the segment $[0,1]$ (up to a countable number of points) in a natural way. This identification is given by the map
	$$
	\prod_{j=0}^\infty \Z_{p_j}\ni (a_0, a_1, \ldots)\mapsto \sum_{j=0}^\infty \frac{a_j}{\prod_{k=0}^j p_k}.
	$$
	Here we suppose that $0\le a_j\le p_j-1$.
	
	It will be more convenient for us to consider the functions on $[0,1]$. Each function in Vilenkin system is a product of generalized Rademacher functions which are constructed as follows. First, divide $[0,1]$ into $p_0$ equal segments. Then the function $r_0$ equals $e^{2\pi i k/p_0}$ on the $k$-th segment (we start enumeration of segments from $0$). Next, we divide each of the segments into $p_1$ equal segments and $r_1$ is a function which is equal to $e^{2\pi i k/p_1}$ on the $k$-th segment. By resuming this procedure, we get the sequence ${r_j}$ of generalized Rademacher functions.
	
	Let us introduce the following notation:
	$$
	m_k=\prod_{j=0}^{k-1} p_j, \quad k\ge 1.
	$$
	We also set $m_0=1$. Every number $n\in\Z_+$ has a unique representation in the mixed radix numerical system defined by $\{p_k\}$:
	\begin{equation}
		\label{mix_rad_rep}
		n=\sum_{j=0}^k n_jm_j, \quad 0\le n_j < p_j.
	\end{equation}
	In this case, we define the $n$-th Vilenkin function as
	$$
	w_n=\prod_{j=0}^k r_j^{n_j}.
	$$
	These functions were first introduced and studied in the paper \cite{Vil} by N. ~Vilenkin. 
	
	Obviously, if $p_j=2$ for all $j$, we get the Walsh system. In the general case, Vilenkin systems form an orthonormal basis in $L^2([0,1])$. Therefore, any function $f\in L^2$ has the following ``Vilenkin-Fourier'' representation:
	$$
	f=\sum_{n=0}^\infty a_n w_n.
	$$
	We will denote the coefficients $a_n$ by $\widehat{f}(n)$. Hence, for every function $f\in L^2$ we understand $\widehat{f}$ as a function on $\Z_+$ (and $\widehat{f}\in\ell^2(\Z_+)$). We have: 
	$$
	\widehat{f}(n)=\langle f, w_n\rangle =\int_0^1 f\bar{w}_n.
	$$ 
	On the other hand, for any function $g\in \ell^2(\Z_+)$ we can define its inverse Vilenkin-Fourier transform as the following function on $[0,1]$:
	$$
	g^\vee=\sum_{n=0}^\infty g(n)w_n.
	$$
	
	We are now ready to formulate the main result of this paper. From this moment, for any set $A\subset\Z_+$ we denote by $P_A$ the following ``Vilenkin-Fourier multiplier'':
	$$
	P_A f=(\mathbbm{1}_A\widehat{f})^\vee.
	$$
	
	\begin{theorem}
		\label{main_thm}
		Let $\{I_s\}$ be a sequence of disjoint intervals in $\Z_+$. Then for every $p\ge 2$ we have:
		$$
		\Big\| \Big( \sum_s |P_{I_s} f|^2 \Big)^{1/2} \Big\|_{L^p} \lesssim \|f\|_{L^p}.
		$$
		The constant in this inequality does not depend on the intervals $\{I_s\}$ nor on the sequence $\{p_j\}$.
	\end{theorem}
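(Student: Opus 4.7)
The plan is to dualize the stated inequality into a square-function estimate for functions with disjoint Fourier supports, and then to prove that estimate by a Calder\'on--Zygmund argument adapted to the natural martingale filtration associated with the Vilenkin system. The case $p=2$ is immediate: Plancherel together with the disjointness of the frequency intervals $I_s$ gives $\sum_s\|P_{I_s}f\|_2^2=\|\sum_s P_{I_s}f\|_2^2\le\|f\|_2^2$, so nothing is lost.

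For $p>2$, a standard $L^p/L^{p'}$ duality reformulation (applied coordinatewise to $\ell^2$-valued functions) converts the inequality into the equivalent dual statement: for every family of functions $(h_s)$ with $\supp\widehat h_s\subset I_s$,
$$
\Big\|\sum_s h_s\Big\|_{L^{p'}} \lesssim \Big\|\Big(\sum_s|h_s|^2\Big)^{1/2}\Big\|_{L^{p'}}, \qquad 1<p'\le 2.
$$
This is an identity at $p'=2$ by orthogonality, so by real interpolation the full range will follow from a single weak-type endpoint at $p'=1$, namely
$$
m\bigl\{\bigl|{\textstyle\sum_s h_s}\bigr|>\lambda\bigr\}\lesssim \lambda^{-1}\Big\|\Big(\sum_s|h_s|^2\Big)^{1/2}\Big\|_{L^1}.
$$

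For this endpoint I would apply a Calder\'on--Zygmund decomposition of $F:=(\sum_s|h_s|^2)^{1/2}$ at height $\lambda$, using the natural filtration $\{\F_k\}_{k\ge 0}$, where $\F_k$ is the $\sigma$-algebra generated by the partition of $[0,1]$ into $m_k$ equal ``Vilenkin atoms''. This yields a good part bounded pointwise by $C\lambda$ (handled by Chebyshev and the $L^2$ estimate component-by-component), together with a bad part supported on a disjoint family of stopping intervals $\{J\}$, each a level-$k(J)$ Vilenkin atom, with $\sum|J|\lesssim\lambda^{-1}\|F\|_1$ and such that the restriction of each $h_s$ to $J$ has mean zero. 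Outside a bounded enlargement of $\bigcup_J J$ the bad contribution is analyzed via the Vilenkin multiplicative identity
$$
P_{[jm_k,(j+1)m_k)}g = w_{jm_k}\,\E_k\!\bigl(\overline{w_{jm_k}}\,g\bigr),
$$
which decomposes the action of each $P_{I_s}$ on a level-$k$ atom into aligned full-block contributions (vanishing outside $J$ because the local piece has mean zero) plus at most a constant number of ``boundary corrections'' near the endpoints $a_s,b_s$ of $I_s$.

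The principal obstacle is that a generic interval $I_s=[a_s,b_s)$ need not be aligned with any fixed scale $m_k$: the endpoints have intricate mixed-radix expansions $a_s=\sum a_{s,j}m_j$, $b_s=\sum b_{s,j}m_j$, so $P_{I_s}$ must be unwound recursively, scale by scale, into aligned blocks plus residual pieces at the endpoints. The key observation that closes the argument is that at each fixed scale $k$ only $O(1)$ of the intervals $I_s$ carry a non-trivial boundary correction at that scale (essentially one from each endpoint, because the blocks at level $k$ partition $\Z_+$), so orthogonality in $s$ is restored on a per-scale basis; summation of the resulting $L^2$-type bounds over scales, together with the measure estimate $\sum|J|\lesssim\lambda^{-1}\|F\|_1$ for the stopping intervals, yields the weak $(1,1)$ bound. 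Marcinkiewicz interpolation with the trivial $L^2$ case then completes the proof of the dual inequality, and hence of Theorem~\ref{main_thm}. Throughout, all constants that enter the estimate come from martingale inequalities on $\{\F_k\}$ and the identity for block multipliers displayed above; neither depends on the intervals $I_s$ nor on the sequence $\{p_j\}$.
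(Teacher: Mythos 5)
Your dualization, the reduction to a weak $(1,1)$ endpoint, and the use of a Calder\'on--Zygmund decomposition along the martingale filtration $\{\F_k\}$ do match the paper's treatment of \emph{part} of the problem (the estimates \eqref{main_second}--\eqref{main_third}), but the proposal breaks down at the step you identify as "the key observation," and the gap is precisely the point where the unbounded Vilenkin case differs from the Walsh and bounded-Vilenkin cases.

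The claim that at a fixed scale $k$ only $O(1)$ of the intervals $I_s$ carry a non-trivial boundary correction is false when $\{p_j\}$ is unbounded. Fix a frequency block $B=[jm_{k+1},(j+1)m_{k+1})$, which contains $p_k$ consecutive copies of $[lm_k,(l+1)m_k)$. If $p_k$ is large, one can place on the order of $p_k$ disjoint intervals $I_s$ entirely inside $B$, none aligned with the blocks $[lm_k,(l+1)m_k)$; each such $I_s$ contributes a "middle piece" at scale $k$ (in the paper's notation, the interval $\widetilde J_{t_s,s}$ with $t_s=k$), and after the conjugation $w_\varkappa^{-1}\cdot$ these become \emph{arbitrary disjoint arcs} in the cyclic group $\Z_{p_k}$. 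Thus "orthogonality in $s$ is restored on a per-scale basis" fails exactly because the number of arcs per scale and per block is unbounded. This is the genuine Rubio de Francia difficulty, now sitting inside a single cyclic group of unbounded order, and a Calder\'on--Zygmund/Hörmander tail argument with one cancellation does not close it: the individual arc multipliers are not of Calder\'on--Zygmund type in a way that sums over $s$. The paper isolates this piece as the estimate \eqref{main_fourth} and proves it by a different route (Whitney refinement to a well-distributed family, smooth truncations, a sharp-maximal-function pointwise bound $(Gf)^\#\lesssim M_2 f$, and a kernel estimate on $\Z_{p_k}$), mirroring Rubio de Francia's original argument rather than the endpoint CZ scheme.

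Two further remarks. First, even for the pieces that \emph{do} yield half-arc multipliers at scale $k$ (the paper's $J_{js}$ and $\widetilde J_{js}$), a plain martingale CZ decomposition of $F=(\sum_s|h_s|^2)^{1/2}$ is not enough: after restriction to a stopping cube $\omega\in\mathcal C_k$ the operator is, up to conjugation, a sum of a "Hilbert transform" $\mathcal H_\omega(h_s^{(\omega)})$ and a "twisted Hilbert transform" $r_k^{-\beta_{ks}}\mathcal H_\omega(h_s^{(\omega)}r_k^{\beta_{ks}})$ (see \eqref{hilbert_transform}). The tail estimate for the second term requires the additional cancellation $\int_\omega h_s^{(2)} r_k^{\gamma_{ks}}=0$, which is not produced by the standard CZ stopping-time; one must use Young's modified decomposition (Lemma~\ref{Cal-Zyg}) with the specific choice $\gamma_{ks}=\beta_{ks}$. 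Second, the dual operator to $f\mapsto(P_{I_s}f)_s$ acts on an arbitrary $\ell^2$-valued $h=(h_s)_s$, so the CZ decomposition must be performed on the $\ell^2$-valued function $h$ itself, not on a family assumed a priori to satisfy $\supp\widehat h_s\subset I_s$; this is a cosmetic point, but the stopping-time has to respect the vector structure.

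In short: your scheme would prove the theorem for Walsh (where the middle pieces are empty) and for bounded Vilenkin (where there are at most $\sup_j p_j$ arcs per block), but the heart of the present paper is the treatment of the unboundedly many middle arcs per scale, for which a separate Rubio de Francia-type argument (sharp maximal function, not endpoint CZ) is required.
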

	
	For the case of Walsh systems (that is, when all $p_j$ are equal to $2$) this result was established by N. Osipov in the paper \cite{Osip}. Later, in \cite{Tsel} the argument from \cite{Osip} was generalized for bounded Vilenkin systems, which means that the sequence $\{p_j\}$ should be bounded by some constant.
	
	Several remarks are in order. First, we note that the proof of Rubio de Francia's inequality for the Walsh system relies on the Gundy's theorem for dyadic martingales. On the other hand, it is clear that if we have only one cyclic group then the proof of such inequality should invoke the methods from original Rubio de Francia's paper \cite{RdF} (that is, the methods of singular integral operators). Therefore, the proof of Theorem~1 should use both martingale and singular integral techniques.
	
	Next, we do not need an assumption of the boundedness of the sequence $\{p_j\}$. This boundedness is natural in many questions concerning Vilenkin systems since it implies the regularity of the underlying filtration (for details, see the next section). For example, if we consider the intervals $I_{k,j}=[jm_k, (j+1)m_k)$ (where $1\le j<p_k$) then it is proved in \cite{Wat} that for $p<2$ the estimate
	$$
	\Big\| \Big( \sum_{k=0}^\infty \sum_{j=1}^{p_k-1} |P_{I_{k,j}} f|^2 \Big)^{1/2} \Big\|_{L^p} \lesssim \|f\|_{L^p}.
	$$
	does not hold in general for arbitrary Vilenkin systems. It is worth noting here that this estimate for $p\ge 2$ (which is a special case of our Theorem~\ref{main_thm}) is known for arbitrary Vilenkin systems, see for example Theorem 2.11 in the book \cite{Weisz} where it is formulated as an embedding of different Hardy spaces.
	
	Even the boundedness of a single operator $P_I$ on $L^p$ for arbitrary interval $I\subset\Z_+$ is not completely trivial in our context of general Vilenkin systems. The fact that the norms of such operators are bounded by a constant that does not depend on an interval nor the sequence $\{p_j\}$ is proved in the paper \cite{Young_1} by Wo-Sang Young. We will use the methods of this paper together with their further development in \cite{Young_2} in the proof of our Theorem~\ref{main_thm}. Besides that, as we mentioned above, we will also need the ideas from Rubio de Francia's original paper \cite{RdF}. We also note here that in the paper \cite{Young_3} the Littlewood--Paley inequality (for intervals $[2^k, 2^{k+1})$) is proved in the context of Vilenkin systems.
	
	\section{Preliminaries}
	\subsection{Vilenkin systems and martingales}
	
	It is well known that Walsh system is strongly connected to dyadic martingales on $[0,1]$. In a similar way, Vilenkin system is also connected to a martingale. Let $\F_k$ be a sigma-algebra on $[0,1]$ generated by the segments
	$$
	[jm_{k}^{-1}, (j+1)m_k^{-1}), \quad 0\le j < m_k.
	$$
	We denote the collection of these segments by $\widetilde{\F}_k$. Then the function $r_k$ is constant on the segments from $\widetilde{\F}_{k+1}$. On the other hand, we see from the definition that the function $r_k$ has mean value zero over any segment from $\widetilde{\F}_k$. We will denote the expectation with respect to $\F_k$ as $\E_k$:
	$$
	\E_k(\cdot)=\E(\cdot|\F_k).
	$$
	It is easy to see that the operator $\E_k$ is connected to Vilenkin system in a following way:
	$$
	\E_k=P_{[0, m_k)}.
	$$
	In other words, we have the following identity:
	$$
	\E_k f = \sum_{j=0}^{m_k-1} \langle f, w_n\rangle w_n.
	$$
	This fact is well known and it is proved in exactly the same way as the similar fact for Walsh system, so we leave it without proof.
	
	We will use the following notation for martingale differences:
	$$
	\Delta_k=\E_{k+1}-\E_k, \quad k\ge 0.
	$$
	Obviously, we have
	$$
	\Delta_k = P_{\delta_k}, \ \text{where}\ \delta_k =[m_k, m_{k+1}).
	$$
	Each of the intervals $\delta_k$ is in turn the union of $p_k-1$ smaller intervals: if we denote
	$$
	\delta_{k,l}=[l m_k, (l+1) m_k), \quad 1\le l \le p_k-1,
	$$
	then we have
	$$
	\delta_k = \bigcup_{l=1}^{p_k - 1} \delta_{k,l}.
	$$
	We also introduce the notation
	$$
	\Delta_{k,l}=P_{\delta_{k,l}}.
	$$
	
	We will work a lot with the representations of integers in the mixed-radix numerical system as in the formula \eqref{mix_rad_rep}. It seems to be very illustrative to use the following notation in order to show that the integer $n$ is equal to the right hand side of the formula \eqref{mix_rad_rep}:
	$$
	n\sim \begin{pmatrix}
		m_0 & m_1 & \ldots & m_k\\
		n_0 & n_1 & \ldots & n_k
	\end{pmatrix}.
	$$
	We will also use similar notation for the intervals in $\Z_+$. For example, the interval $[0, \alpha m_k)$ can be written as follows:
	$$
	\begin{pmatrix}
		m_0 & m_1 & \ldots & m_{k-1} & m_k\\
		*   &  *  & \ldots &    *    & [0, \alpha)
	\end{pmatrix}.
	$$
	This notation seems to be self-explanatory.
	
	Suppose that we have two numbers $a$ and $b$ and let us write them using the notation we just introduced:
	$$
	a\sim \begin{pmatrix}
		m_0 & m_1 & \ldots & m_k\\
		\alpha_0 & \alpha_1 & \ldots & \alpha_k
	\end{pmatrix}, \ b\sim \begin{pmatrix}
		m_0 & m_1 & \ldots & m_k\\
		\beta_0 & \beta_1 & \ldots & \beta_k
	\end{pmatrix}.
	$$
	Then we see straight from the definition that 
	$$
	w_a w_b = w_{a\dotplus b},
	$$
	where
	$$
	a\dotplus b \sim \begin{pmatrix}
		m_0 & m_1 & \ldots & m_k\\
		(\alpha_0+\beta_0)\mathrm{mod}\, p_0 & (\alpha_1 + \beta_1)\mathrm{mod}\, p_1& \ldots & (\alpha_k + \beta_k)\mathrm{mod}\, p_k
	\end{pmatrix}.
	$$
	
	We also denote by $(\dotdiv n)$ the inverse of the number $n$ with respect to the operation~$\dotplus$. 
	
	\subsection{Combinatorial construction and scheme of proof}
	
	Recall that in the formulation of Theorem~\ref{main_thm} we have pairwise disjoint intervals $I_s=[a_s, b_s)$. Now we describe a decomposition of each interval into smaller subintervals. In order to simplify the notation we omit the index $s$ and describe a decomposition of an interval $[a, b)$. This combinatorial construction is presented in the paper \cite{Tsel} and it is a generalization of a similar construction for the case of Walsh systems from the paper \cite{Osip}.
	
	Suppose that
	$$
	a=\sum_{k=0}^N \alpha_k m_k, \quad b=\sum_{k=0}^N \beta_k m_k.
	$$
	First of all, we decompose the interval $[0,b)$. We write:
	$$
	[0, b) =\bigcup_{j=0}^N J_j,
	$$
	where $J_j$ is the following interval:
	$$
	J_j = \Big[\sum_{l=j+1}^N \beta_l m_l,\  \sum_{l=j}^N \beta_l m_l \Big).
	$$
	This interval can also be written as follows:
	$$
	J_j \sim \begin{pmatrix}
		m_0 & m_1 & \ldots & m_{j-1} & m_j & m_{j+1} & \ldots & m_N\\
		*   &   * & \ldots & *   & [0, \beta_j) & \beta_{j+1} & \ldots & \beta_N 
	\end{pmatrix}.
	$$
	If $\beta_j = 0$ then $J_j$ is empty.
	
	We have $a\in J_t$ for some $t$. It means that 
	$$
	\alpha_N=\beta_N,\ \alpha_{N-1}=\beta_{N-1}, \ldots, \alpha_{t+1}=\beta_{t+1}, \ \alpha_t < \beta_t.
	$$
	Therefore, now we need to decompose the interval
	$$
	[a, \beta_N m_N + \ldots + \beta_t m_t).
	$$
	We write it as
	$$
	\{a\}\cup \bigcup_{j=0}^t \widetilde{J}_j,
	$$
	where for $0\le j\le t-1$ we have
	$$
	\widetilde{J}_j \sim \begin{pmatrix}
		m_0 & m_1 & \ldots & m_{j-1} & m_j               & m_{j+1} & \ldots & m_N\\
		*   &   * & \ldots & *   & [\alpha_{j}+1, p_j-1] & \alpha_{j+1} & \ldots & \alpha_N 
	\end{pmatrix}.
	$$
	and
	$$
	\widetilde{J}_t \sim 
	\begin{pmatrix}
		m_0 & m_1 & \ldots & m_{t-1} & m_t               & m_{t+1} & \ldots & m_N\\
		*   &   * & \ldots & *   & [\alpha_{t}+1, \beta_t-1] & \alpha_{t+1} & \ldots & \alpha_N 
	\end{pmatrix}.
	$$
	Here $\widetilde{J}_j = \emptyset$ if $\alpha_j = p_j-1$ (for $0\le j\le t-1$) and $\widetilde{J}_t=\emptyset$ if $\alpha_t = \beta_t - 1$.
	
	Now we return the index $s$: for an interval $I_s= [a_s, b_s)$ we obtain its decomposition into intervals $J_{js}$ and $\widetilde{J}_{js}$. We are interested only in nonempty intervals: let $\Theta_s$ and $\widetilde{\Theta}_s$ be the sets of indices $j$ for which the intervals $J_{js}$ and respectfully $\widetilde{J}_{js}$ are nonempty. We finally have the following decomposition:
	$$
	I_s=\{a_s\}\cup \bigcup_{j\in\Theta_s} J_{js}\cup \bigcup_{j\in\widetilde{\Theta}_s} \widetilde{J}_{js}\cup \widetilde{J}_{t_s,s}.
	$$
	Here for some $s$ the interval $\widetilde{J}_{t_s,s}$ may be empty. We now have to prove the following four inequalities for $2 < p <\infty$ (note that Theorem~\ref{main_thm} is obvious for $p=2$ due to orthogonality of Vilenkin functions):
	\begin{gather}
		\Big\| \Big( \sum_s |P_{\{a_s\}}f|^2 \Big)^{1/2} \Big\|_{L^p}\lesssim \|f\|_{L^p};\label{main_first}\\
		\Big\| \Big( \sum_s \Big| \sum_{j\in\Theta_s} P_{J_{js}}f \Big|^2 \Big)^{1/2} \Big\|_{L^p}\lesssim \|f\|_{L^p};\label{main_second}\\
		\Big\| \Big( \sum_s \Big| \sum_{j\in\widetilde{\Theta}_s} P_{\widetilde{J}_{js}}f \Big|^2 \Big)^{1/2} \Big\|_{L^p}\lesssim \|f\|_{L^p};\label{main_third}\\
		\Big\| \Big(\sum_s |P_{\widetilde{J}_{t_s,s}f}|^2\Big)^{1/2}\Big\|_{L^p} \lesssim \|f\|_{L^p}\label{main_fourth}.
	\end{gather}
	The first estimate here is easy because of Parseval's theorem: 
	$$
	\Big( \sum_s |P_{\{a_s\}}f|^2 \Big)^{1/2} \leq \|f\|_{L^2}\leq \|f\|_{L^p}.
	$$
	The inequalities \eqref{main_second} and \eqref{main_third} are similar to each other and so we will only show how to prove \eqref{main_second}.
	
	The next two sections are devoted to the proofs of the estimates \eqref{main_second} and \eqref{main_fourth}. In the proof of \eqref{main_second} we will use duality together with the special version of Calderon-Zygmund decomposition which is presented in the papers \cite{Young_1} and \cite{Young_2} by Wo-Sang Young. 
	
	The estimate \eqref{main_fourth} is different (note for example that in the case of Walsh systems all intervals $\widetilde{J}_{t_s,s}$ are empty). In order to prove it, we will use the ideas from Rubio de Francia's original paper \cite{RdF} together with somewhat similar ideas from \cite{KisLP}. 
	
	\section{The proof of \eqref{main_second}}
	
	\subsection{Duality and application of Calderon--Zygmund decomposition}
	
	We now pass to the proof of the estimate \eqref{main_second}. Obviously, it is enough to prove our inequality for an arbitrary finite number of intervals (of course, the estimates should not depend on the number of intervals). As in the previous section, we will need the decomposition of $b_s$ in the mixed-radix numerical system:
	$$
	b_s=\sum_{k=0}^N \beta_{ks}m_k.
	$$ 
	Here $N$ is an arbitrary large number (since we have only a finite number of intervals, we may suppose that $b_s< m_{N+1}$ for some $N$).
	
	We see from the definition that
	$$
	J_{js}\dotdiv b_s=\bigcup_{k=p_j-\beta_{js}}^{p_j-1} \delta_{j,k}.
	$$
	Therefore, the operator $P_{J_{js}}$ can be written in a following way:
	$$
	P_{J_{js}} f = w_{b_s}(\Delta_{j, p_j-\beta_{js}}+\ldots + \Delta_{j, p_j-1})[w_{b_{s}}^{-1} f].
	$$ 
	Since $|w_{b_s}|=1$ everywhere on $[0,1]$, the estimate \eqref{main_second} is equivalent to the following inequality:
	$$
	\Big\| \Big( \sum_s \Big| \sum_{j\in\Theta_s} (\Delta_{j, p_j-\beta_{js}}+\ldots + \Delta_{j, p_j-1})[w_{b_s}^{-1}f] \Big|^2 \Big)^{1/2} \Big\|_{L^p} \lesssim \|f\|_{L^p}.
	$$
	Let us introduce the notation
	$$
	Q_{j,s}=\sum_{k=p_j-\beta_{js}}^{p_j-1} \Delta_{j,k}.
	$$
	Using this notation, we rewrite our desired inequality:
	$$
	\Big\| \Big( \sum_s \Big| \sum_{j\in\Theta_s} Q_{j,s}[w_{b_s}^{-1}f] \Big|^2 \Big)^{1/2} \Big\|_{L^p} \lesssim \|f\|_{L^p}.
	$$
	This inequality is equivalent to the fact that a certain operator is bounded from $L^p$ to $L^p(\ell^2)$. To be more precise, we consider the following operator $G$ that maps scalar-valued functions to vector-valued ones:
	$$
	Gf = \Big( \sum_{j\in\Theta_s}Q_{j,s}[w_{b_s}^{-1}f] \Big)_s.
	$$
	In order to prove that $G$ is a bounded operator from $L^p$ to $L^p(\ell^2)$, we use duality. An easy computation shows that for the dual operator $G^*$ we have
	$$
	G^* h = \sum_s w_{b_s}\sum_{j\in\Theta_s} Q_{j,s} h_s.
	$$
	Here $h=(h_s)_s$ is an $\ell^2$-valued function.
	
	We need to prove that $G^*$ is a bounded operator from $L^{p'}(\ell^2)$ to $L^{p'}$ (where $1/p + 1/p' = 1$). For $p=p'=2$ this fact easily follows from the Parseval's identity (we use here that the intervals $I_s$ and therefore $J_{js}$ are disjoint). By the Marcinkiewicz interpolation theorem, it is enough to show that $G$ is a bounded operator from $L^1(\ell^2)$ to $L^{1,\infty}$.
	
	In order to do this, we use the specific version of the Calderon--Zygmund decomposition which was first introduced in \cite{Young_1} and later modified for the $\ell^2$-valued case in \cite{Young_2}. We will need the latter modification, so we present the formulation of this Calderon--Zygmund type lemma here.
	
	\begin{lemma} \label{Cal-Zyg}
		Fix an arbitrary positive $\lambda>0$. Suppose that $h=(h_s)_s$ is an $\ell^2$-valued function such that $\|h\|_{L^1(\ell^2)}\leq \lambda$. Fix also the positive integers $\gamma_{ks}$ such that $0\le \gamma_{ks}<p_k$. Then there exist a collection of disjoint intervals $\mathcal{C}$ and $\ell^2$-valued functions $h^{(1)}$ and $h^{(2)}$ such that the following conditions hold:
		\begin{gather}
			h=h^{(1)}+h^{(2)};\\
			\|h^{(1)}\|_{L^\infty(\ell^2)}\leq C\lambda; \label{good_part_1}\\
			\|h^{(1)}\|_{L^1(\ell^2)}\leq C\|h\|_{L^1(\ell^2)};\label{good_part_2}\\
			\mathcal{C}=\bigcup_{k=0}^\infty \mathcal{C}_k,
		\end{gather}
		where each $\omega_j\in\mathcal{C}_k$ is $\mathcal{F}_{k+1}$-measurable and is strictly contained in one of the segments from $\widetilde{\F}_k$;
		\begin{gather}
			h^{(2)}(x)=0 \ \text{for}\ x\not\in \Omega=\bigcup_{\omega_j\in\mathcal{C}} \omega_j;\\
			\int_{\omega_j}h^{(2)}=0; \ \int_{\omega_j} h^{(2)}_s r_k^{\gamma_{ks}}=0 \ \text{for every}\ \omega_j\in\mathcal{C}_k;\label{cancellation_bad_part}\\
			\int_{\omega_j} |h^{(2)}|_{\ell^2}\leq C \int_{\omega_j} |h|_{\ell^2} \ \text{for every}\  \omega_j\in\mathcal{C};\label{bad_In_CZ}\\
			\sum_{\omega_j\in\mathcal{C}} |\omega_j| \leq \lambda^{-1}\|h\|_{L^1(\ell^2)} \label{lengths_of_omegas}.	\end{gather}
	\end{lemma}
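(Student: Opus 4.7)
My plan is to adapt the classical Calder\'on--Zygmund stopping-time construction to the Vilenkin filtration, and then enforce the extra cancellation property \eqref{cancellation_bad_part} by enlarging the ``good'' part $h^{(1)}$ on each stopped set so that it absorbs two moments of $h$ instead of one.

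First I would build $\mathcal{C}$ level by level. The initial segment $[0,1)$ lies in $\widetilde{\F}_0$, and by hypothesis its $\ell^2$-average of $|h|_{\ell^2}$ is at most $\lambda$. Proceeding inductively, for every un-stopped $\widetilde{\F}_k$-segment $I$ whose $\ell^2$-average is $\leq \lambda$, I examine its $p_k$ children in $\widetilde{\F}_{k+1}$ and let $\omega$ be the union of those children on which the average of $|h|_{\ell^2}$ already exceeds $\lambda$. Because the parent's average is $\leq\lambda$, not every child can be bad, so $\omega\subsetneq I$ and is $\F_{k+1}$-measurable; I place such $\omega$ into $\mathcal{C}_k$ and iterate the procedure on the remaining, still-good children. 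Outside $\Omega=\bigcup\omega_j$ every dyadic-Vilenkin ancestor of a given point has average $\leq\lambda$, so by martingale differentiation $|h(x)|_{\ell^2}\leq\lambda$ almost everywhere there.

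Next I would define the two pieces. Outside $\Omega$ set $h^{(1)}=h$ and $h^{(2)}=0$. On each $\omega_j\in\mathcal{C}_k$, define $h^{(1)}_s$ componentwise as the orthogonal projection in $L^2(\omega_j)$ of $h_s$ onto the two-dimensional subspace $\operatorname{span}(\mathbf{1}_{\omega_j},\, r_k^{\gamma_{ks}}\mathbf{1}_{\omega_j})$; then \eqref{cancellation_bad_part} holds by construction. The packing estimate \eqref{lengths_of_omegas} is immediate from the stopping rule, since each $\omega_j$ satisfies $|\omega_j|\leq\lambda^{-1}\int_{\omega_j}|h|_{\ell^2}$ and the $\omega_j$'s are pairwise disjoint.

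The step I expect to be the main obstacle is the $L^\infty$ bound \eqref{good_part_1}, for which one has to control the two-dimensional projection above pointwise in $\ell^2$ by $C\lambda$ uniformly in the sequence $\{p_k\}$. Writing $\omega_j$ as a disjoint union of the $\widetilde{\F}_{k+1}$-children indexed by some $L_j\subsetneq\{0,\dots,p_k-1\}$, the bound reduces to inverting the $2\times 2$ Gram matrix of $\mathbf{1}$ and $r_k^{\gamma_{ks}}$ on $\omega_j$, whose determinant is $|\omega_j|^{2}\bigl(1 - |L_j|^{-2}|\sum_{l\in L_j}\zeta^{l\gamma_{ks}}|^{2}\bigr)$ with $\zeta = e^{2\pi i/p_k}$. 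Following \cite{Young_1, Young_2}, I would refine the stopping procedure (for example by further splitting $\omega_j$ so that only ``aligned'' sub-families of children appear, or by iterating once more in $k$) to ensure this determinant is bounded below by an absolute constant; the resulting pointwise bound then gives \eqref{good_part_1}, and \eqref{good_part_2} and \eqref{bad_In_CZ} follow by the triangle inequality together with $\int_{\omega_j}|h^{(1)}|_{\ell^2}\lesssim \int_{\omega_j}|h|_{\ell^2}$.
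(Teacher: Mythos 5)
The paper does not prove this lemma: it is quoted and the proof is attributed to \cite{Young_2} (with the scalar precursor in \cite{Young_1}). So there is no in-paper argument to compare against; what follows assesses your construction on its own terms.

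There are two substantive gaps, only one of which you flag. First, the lemma requires $\mathcal{C}$ to be a collection of \emph{intervals}: each $\omega_j\in\mathcal{C}_k$ is a union of \emph{consecutive} $\widetilde{\F}_{k+1}$-children of some $\widetilde{\F}_k$-segment, and this interval (arc) structure is essential later when $\omega_j$ is viewed as an arc of the circle $\widetilde{\omega}_j$ and $3\omega_j$ is formed. Your set $\omega$ --- the union of \emph{all} children whose average exceeds $\lambda$ --- is in general not an interval, because the bad children need not be adjacent. Second, and more serious, the $L^\infty$ bound \eqref{good_part_1} cannot hold for your choice of $\omega_j$ regardless of how the projection is organized: the $L^1$-average $\frac{1}{|\omega_j|}\int_{\omega_j}|h|_{\ell^2}$ can be of order $p_k\lambda$ (take a parent segment on which almost all of $\int_I|h|_{\ell^2}\le\lambda|I|$ is concentrated on a single child), and any function in $\mathrm{span}(\mathbf{1}_{\omega_j},r_k^{\gamma_{ks}}\mathbf{1}_{\omega_j})$ matching the zeroth moment of $h_s$ on $\omega_j$ already has $L^\infty$ norm $\ge\frac{1}{|\omega_j|}\big|\int_{\omega_j}h_s\big|$, which is unbounded in $p_k$. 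This fails already for the projection onto constants, so it is not a Gram-conditioning problem at all. The missing idea is that $\omega_j$ must be enlarged by adjoining good children (while remaining a proper sub-arc of the parent) so that the $L^1$-average over $\omega_j$ is $O(\lambda)$; arranging this selection --- consistently with the interval structure, the disjointness of the $\omega_j$, and the packing bound \eqref{lengths_of_omegas} --- is the actual content of the construction in \cite{Young_1,Young_2}, and it is absent from your proposal.

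Your stated worry about the Gram determinant is, by contrast, a secondary issue: once $\omega_j$ is genuinely an arc of consecutive children $e_m,\dots,e_n$, the values $r_k^{\gamma_{ks}}|_{e_l}$ are equally spaced points on the unit circle, and a direct computation shows the ratio $\|r_k^{\gamma_{ks}}-\mu\|_{L^\infty(\omega_j)}^2/\|r_k^{\gamma_{ks}}-\mu\|_{L^2(\omega_j)}^2$ (with $\mu$ the mean over $\omega_j$) is bounded by an absolute constant, so the projection coefficients are controlled by the $L^1$-average of $h$ on $\omega_j$. The remaining properties \eqref{good_part_2}, \eqref{cancellation_bad_part}, \eqref{bad_In_CZ}, \eqref{lengths_of_omegas} then do follow as you indicate. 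In short: the stopping-time philosophy is right, but the selection of the $\omega_j$ --- the heart of Young's lemma --- is the part you have not supplied, and without it the good part is not in $L^\infty$.
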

	Here $h^{(1)}$ and $h^{(2)}$ are ``good" and ``bad" parts of the function $h$ respectively. The proof of this lemma is contained in the paper \cite{Young_2}.
	
	Our goal is to prove the weak type $(1,1)$ estimate for the operator $G^*$:
	$$
	|\{|G^*h|>\lambda\}|\leq\frac{C}{\lambda} \|h\|_{L^1(\ell^2)}.
	$$
	If $\|h\|_{L^1(\ell^2)}>\lambda$, then there is nothing to prove. Otherwise, we perform the Calderon--Zygmund type decomposition described in the lemma above (the choice of the numbers $\gamma_{ks}$ will be specified later). As usual, we write:
	$$
	|\{|G^*h|>\lambda\}| \leq |\{|G^*h^{(1)}|>\lambda/2\}| + |\{|G^*h^{(2)}| > \lambda/2\}|.
	$$ 
	The first summand here is easy to estimate, we simply use the $L^2$-boundedness of $G^*$ and write (again, as usual):
	$$
	|\{|G^* h^{(1)}| > \lambda/2\}|\leq \frac{4}{\lambda^2}\int |G^* h^{(1)}|^2 \lesssim \frac{1}{\lambda^2} \int |h^{(1)}|_{\ell^2}^2\lesssim \frac{1}{\lambda}\int |h^{(1)}|_{\ell^2}\lesssim \frac{1}{\lambda}\|h\|_{L^1(\ell^2)}.
	$$
	Here we used the properties \eqref{good_part_1} and \eqref{good_part_2}. Now we only need to prove the estimate
	\begin{equation}
		|\{|G^*h^{(2)}| > \lambda/2\}|\lesssim \frac{1}{\lambda}\|h\|_{L^1(\ell^2)}.
	\end{equation}
	
	\subsection{An estimate for a ``bad'' part: reduction to an inequality for one cyclic group} Recall that in Lemma~\ref{Cal-Zyg} we obtained a collection of intervals $\mathcal{C}$. For each $\omega_j\in\mathcal{C}$ we will need a slightly non-standard definition of $3\omega_j$ (which appeared in \cite{Young_1}). Suppose that $\omega_j\in \mathcal{C}_k$. It means that $\omega_j$ is $\F_{k+1}$-measurable and is contained in some $\widetilde{\omega}_j\in \widetilde{\F}_k$. Let us consider $\widetilde{\omega}_j$ as a circle and $\omega_j$ as an arc on this circle. Then $3\omega_j$ is an arc with the same center such that $|3\omega_j|=3|\omega_j|$. On our initial interval $[0,1]$ it means that $3\omega_j$ is also $\F_{k+1}$-measurable and is contained in $\widetilde{\omega}_j$ (but it is not necessarily a segment, it also can be the union of two segments). Using the property \eqref{lengths_of_omegas} we see that we only need to prove the following inequality:
	$$
	|\{x\not \in \bigcup 3\omega_j: |G^* h^{(2)}(x)|>\lambda/2\}| \lesssim \frac{1}{\lambda} \|h\|_{L^1(\ell^2)}.
	$$ 
	%\begin{tikzpicture}
	%\draw (0, 0) -- (12, 0);
	%\draw (0, -0.1) -- (0, 0.1);
	%\draw (1, -0.1) -- (1, 0.1);
	%\draw (2, -0.1) -- (2, 0.1);
	%\draw (3, -0.1) -- (3, 0.1);
	%\draw (4, -0.1) -- (4, 0.1);
	%\draw (5, -0.1) -- (5, 0.1);
	%\draw (6, -0.1) -- (6, 0.1);
	%\draw (7, -0.1) -- (7, 0.1);
	%\draw (8, -0.1) -- (8, 0.1);
	%\draw (9, -0.1) -- (9, 0.1);
	%\draw (10, -0.1) -- (10, 0.1);
	%\draw (11, -0.1) -- (11, 0.1);
	%\draw (12, -0.1) -- (12, 0.1);
	%\end{tikzpicture}
	We start with the following simple estimates:
	\begin{multline*}
		|\{x\not \in \bigcup 3\omega_j: |G^* h^{(2)}(x)|>\lambda/2\}| \leq \frac{4}{\lambda} \int_{[0,1]\setminus \bigcup 3\omega_j} |G^* h^{(2)}| \\ \leq \frac{4}{\lambda} \sum_k \sum_{\omega\in\mathcal{C}_k} \int_{[0,1]\setminus 3\omega} |G^*h^{(\omega)}|,
	\end{multline*}
	where $h^{(\omega)}=h^{(2)}\mathbbm{1}_{\omega}$. Let us fix a non-negative integer $k$ and $\omega\in\mathcal{C}_k$. Now our goal is to prove the following estimate:
	\begin{equation}
		\label{to_prove_bad_part}
		\int_{[0,1]\setminus 3\omega} |G^* h^{(\omega)}|\lesssim \int_\omega |h^{(\omega)}|_{\ell^2}.
	\end{equation}
	This would be enough due to the property \eqref{bad_In_CZ} (the underlying constant in this inequality of course should not depend on $\omega$ nor on $k$).
	
	Recall the formula for $G^*$:
	$$
	G^* h^{(\omega)}=\sum_s w_{b_s} \sum_{l\in\Theta_s}Q_{l,s}h^{(\omega)}_s.
	$$
	In order to estimate this quantity, we will consider three cases.
	
	\subsubsection{Case 1: $l>k$} This case is easy due to the following simple observation.
	
	\begin{lemma}
		\label{orthogonality}
		Suppose that a function $f$ is such that $\supp f \subset e$ where $e$ is $\mathcal{F}_k$-measurable. Then $\supp \Delta_{k,j}f\subset e$ for all $1\le j\le p_k-1$.
	\end{lemma}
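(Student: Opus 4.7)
The plan is to establish the operator identity
$$
\Delta_{k,j} f = r_k^j \, \E_k\!\bigl( \overline{r_k^j}\, f \bigr),
$$
and then read off the support property from the fact that $\E_k$ obviously preserves $\F_k$-measurable supports while multiplication by $r_k^j$ (which is nowhere vanishing) does not move the support at all.

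To verify the identity, I would first note that, by the definition of Vilenkin functions, for any $r$ with $0\le r<m_k$ we have $w_{jm_k+r}=r_k^j\, w_r$, because in the mixed-radix representation the digit of $r$ at position $k$ is zero. Setting $g=\overline{r_k^j}\,f$, a direct computation gives
$$
\widehat{g}(r)=\int g\,\overline{w_r}=\int f\,\overline{r_k^j w_r}=\int f\,\overline{w_{jm_k+r}}=\widehat{f}(jm_k+r).
$$
Using the identity $\E_k = P_{[0,m_k)}$ recalled in the preliminaries, one obtains
$$
r_k^j\,\E_k g = r_k^j \sum_{r=0}^{m_k-1} \widehat{f}(jm_k+r)\, w_r = \sum_{r=0}^{m_k-1} \widehat{f}(jm_k+r)\, w_{jm_k+r} = P_{\delta_{k,j}}f = \Delta_{k,j}f,
$$
which is exactly the claimed identity.

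With the identity in hand, suppose $\supp f\subset e$ with $e$ being $\F_k$-measurable. Then $\overline{r_k^j}\,f$ is again supported in $e$, and since $e$ is a union of atoms of $\F_k$, the conditional expectation $\E_k(\overline{r_k^j}\,f)$ vanishes outside $e$ as well (on any $\F_k$-atom disjoint from $e$ the integrand is zero). Finally, multiplying by $r_k^j$ does not affect the support because $|r_k^j|=1$. Hence $\Delta_{k,j}f$ vanishes outside $e$, proving the lemma.

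I do not anticipate any real obstacle here; the only point to be careful about is the interaction between the multiplication by $r_k^j$ (which is $\F_{k+1}$- but not $\F_k$-measurable) and the conditional expectation $\E_k$. The identity above isolates this interplay cleanly and avoids having to argue directly with the reproducing kernel of $\Delta_{k,j}$, although a kernel-based proof using that the kernel of $\E_k$ is supported on the diagonal of $\F_k$-atoms would work equally well.
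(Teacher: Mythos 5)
Your proof is correct, but it takes a different route from the paper's. The paper argues by orthogonality: it first observes that $\Delta_k f = 0$ on any $\F_k$-atom $e_0$ where $f$ vanishes (since both $\E_{k+1}$ and $\E_k$ kill $f$ there), then notes that the functions $\Delta_{k,j_1}f$ and $\Delta_{k,j_2}f$ are orthogonal in $L^2(e_0)$ for $j_1\neq j_2$ (they are constant multiples of $r_k^{j_1}$ and $r_k^{j_2}$ on $e_0$), and concludes that in the vanishing sum $0=\Delta_k f=\sum_j \Delta_{k,j}f$ each term must vanish. You instead derive the exact operator identity $\Delta_{k,j}=r_k^j\,\E_k\,\overline{r_k^j}$, from which the support property is immediate because multiplication by the unimodular $r_k^j$ does not move supports and $\E_k$ obviously preserves $\F_k$-measurable supports. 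Your identity is a slightly stronger structural statement than the lemma requires and makes the mechanism completely explicit; the paper's argument is softer, avoiding explicit Fourier-coefficient computations at the cost of invoking a small orthogonality observation. Both are fine; your identity has the side benefit of being directly reusable elsewhere (e.g.\ it immediately gives the constancy of $\Delta_{k,j}f$ on $\widetilde{\F}_{k+1}$-atoms, which the paper uses later).
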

	\begin{proof}
		It is enough to show that if $f=0$ on some set $e_0\in\widetilde{\mathcal{F}}_k$, then $\Delta_{k,l}f=0$ on $e_0$. First we note that $\Delta_k f = 0$ on $e_0$ since this holds for both $\mathbb{E}_{k+1}$ and $\mathbb{E}_k$. Next, it follows straight from the construction of generalized Rademacher functions that $r_k^{j_1}$ and $r_k^{j_2}$ are orthogonal in $L^2(e_0)$ for $j_1\neq j_2$ while the functions $r_l$ are constants on $e_0$ for $l<k$. Therefore, $\Delta_{k,j_1} f$ and $\Delta_{k, j_2} f$ are orthogonal in $L^2(e_0)$ for $j_1\neq j_2$ and since we have
		$$
		0 = \Delta_k f = \sum_{j=1}^{p_k-1} \Delta_{k,j} f,
		$$ 
		it follows that $\Delta_{k,j} f=0$ on $e_0$ for each $j$.
	\end{proof}
	It is worth noting that this lemma is proved also in \cite{Tsel}. Now recall that
	$$
	Q_{l,s}=\Delta_{l,p_l-\beta_{ls}}+\ldots+\Delta_{l, p_l-1}.
	$$
	From the previous lemma it follows that $\supp Q_{l,s} h^{(\omega)}_s\subset \omega$ since $\supp h^{(\omega)}_s\subset \omega$ and $\omega$ is $\mathcal{F}_{k+1}$- and therefore $\mathcal{F}_l$-measurable. Hence the summands with $l>k$ do not affect the left-hand side of the formula \eqref{to_prove_bad_part}.
	
	\subsubsection{Case 2: $l<k$} In this case, we write:
	$$
	\Delta_{l,m}h^{(\omega)}_s=\sum_{n\in\delta_{l,m}}w_n \int h^{(\omega)}_s \bar{w}_n.
	$$
	All functions $r_j$ for $j<k$ are constants on intervals from $\widetilde{\mathcal{F}}_{k}$ and therefore are constants on $\omega$. Hence $w_n=\mathrm{const}$ on $\omega$ for $n\in\delta_{l,m}$ and we use the property \eqref{cancellation_bad_part} (the first part) to conclude that $Q_{l,s}h_s^{(\omega)}=0$ if $l<k$.
	
	\subsubsection{Case 3: $l=k$} The same argument as in the first case shows that $G^* h^{(\omega)}$ is supported in $\widetilde{\omega}$ --- the segment from $\widetilde{\F}_k$ which contains $\omega$. Therefore, we need to estimate the quantity
	\begin{multline}
		\label{before_simplif}
		\int_{\widetilde{\omega}\setminus 3\omega} \Big| \sum_{s:k\in\Theta_s} w_{b_s} Q_{k,s} h^{(\omega)}_s \Big|=\int_{\widetilde{\omega}\setminus 3\omega} \sqrt{\Big| \sum_{s:k\in\Theta_s} w_{b_s} Q_{k,s} h^{(\omega)}_s \Big|^2}\\=\int_{\widetilde{\omega}\setminus 3\omega} \sqrt{ \sum_{s: k\in\Theta_s} |Q_{k,s}h_s^{(\omega)}|^2 + g},
	\end{multline}
	where $g$ is a sum of functions of the form
	$$
	w_{b_{s_1}}\bar{w}_{b_{s_2}} Q_{k,s_1}h_{s_1}^{(\omega)} \overline{Q_{k,s_2}h^{(\omega)}_{s_2}}
	$$
	for $s_1\neq s_2$.
	
	If $e\in\widetilde{\F}_{k+1}$ then we see from the definition that $Q_{k,s}h_s^{\omega}$ is constant on $e$ (because for $n\in\delta_{k,j}$ we have $w_n=\mathrm{const}$ on $e$). On the other hand, for $s_1\neq s_2$ the functions $w_{b_{s_1}}$ and $w_{b_{s_2}}$ are orthogonal in $L^2(e)$. In order to see it, recall that an interval $J_{ks}$ has the following form:
	$$
	J_{ks} \sim \begin{pmatrix}
		m_0 & m_1 & \ldots & m_{k-1} & m_k & m_{k+1} & \ldots & m_N\\
		*   &   * & \ldots & *   & [0, \beta_{k,s}) & \beta_{k+1,s} & \ldots & \beta_{N,s} 
	\end{pmatrix}
	$$
	and the intervals $J_{ks_1}$ and $J_{ks_2}$ are disjoint. This fact implies then $\beta_{l,s_1}\neq \beta_{l,s_2}$ for some $l\ge k+1$. If we take the largest such $l$, then the orthogonality of $w_{b_{s_1}}$ and $w_{b_{s_2}}$ on $L^2(e_0)$ for $e_0\in\widetilde{\F}_l$ follows straight from the definition. Therefore, these functions are indeed orthogonal in $L^2(e)$ and it means that
	$$
	\int_e g = 0.
	$$
	
	The set $\widetilde{\omega}\setminus 3\omega$ is a union of several disjoint segments from $\widetilde{\F}_{k+1}$. We have seen that the function 
	$$
	\sum_{s: k\in\Theta_s} |Q_{k,s}h_s^{(\omega)}|^2
	$$
	is a constant on each of these segments. Let us take one of these segments $e$ and denote this constant by $c$. Then we use H\"older's inequality and write:
	\begin{multline*}
		\int_{e} \sqrt{ \sum_{s: k\in\Theta_s} |Q_{k,s}h_s^{(\omega)}|^2 + g} = \int_e \sqrt{c+g}\\ \leq \Big( \int_e c+g \Big)^{1/2} \cdot |e^{1/2}|=|e| c^{1/2} =\int_{e} \Big( \sum_{s: k\in\Theta_s} |Q_{k,s}h_s^{(\omega)}|^2\Big)^{1/2}.
	\end{multline*}
	Summing these inequalities over all $\widetilde{\F}_{k+1}\ni e\subset \widetilde{\omega}\setminus 3\omega$, we get that the quantity in the formula \eqref{before_simplif} does not exceed
	$$
	\int_{\widetilde{\omega}\setminus 3\omega} \Big( \sum_{s: k\in\Theta_s} |Q_{k,s}h_s^{(\omega)}|^2\Big)^{1/2}.
	$$
	Thus now we need to prove the following estimate:
	\begin{equation*}
		\int_{\widetilde{\omega}\setminus 3\omega} \Big( \sum_{s: k\in\Theta_s} |Q_{k,s}h_s^{(\omega)}|^2\Big)^{1/2}\lesssim \int_\omega \Big( \sum_{s: k\in\Theta_s} |h_s^{(\omega)}|^2 \Big)^{1/2}.
	\end{equation*}
	
	We will prove the following stronger inequality:
	\begin{equation}
		\label{est_bad_one_gr}
		\int_{\widetilde{\omega}\setminus 3\omega} \Big( \sum_{s: k\in\Theta_s} |Q_{k,s}h_s^{(\omega)}|^2\Big)^{1/2}\lesssim \Big( \sum_{s: k\in\Theta_s} \Big( \int_\omega |h_s^{(\omega)}| \Big)^2 \Big)^{1/2}.
	\end{equation}
	This estimate implies the previous one due to Minkowski integral inequality.
	
	Some informal remarks concerning the estimate \eqref{est_bad_one_gr} are in order. Note that now all our functions are supported inside $\widetilde{\omega}\in \widetilde{\F}_k$ and it is easy to see that both sides of our estimate depend only on the integrals of functions $h_s^{(\omega)}$ over segments from $\widetilde{\F}_{k+1}$. If we enumerate ``from left to right'' the segments from $\widetilde{\F}_{k+1}$ which lie inside $\widetilde{\omega}$:
	$$
	\widetilde{\omega}=e_0\cup e_1\cup\ldots\cup e_{p_k-1},
	$$
	we see that basically the estimate we have to prove is an inequality for the functions on the group $\Z_{p_k}=\{0, 1, \ldots, p_k-1\}$ and the operators $Q_{k,s}$ are Fourier multipliers on this group that correspond to the characteristic functions of the intervals. This remark will be more clear after the next subsection. Now we just note that this observation will also be useful to us later, in the proof of the inequality \eqref{main_fourth}.
	
	\subsection{An estimate for a ``bad'' part: final computations}
	
	Now we pass to the proof of the estimate \eqref{est_bad_one_gr}. By the definition, we have:
	$$
	\Delta_{k,l}h_s^{(\omega)}=\sum_{j\in\delta_{k,l}}w_j \int h_s^{(\omega)} \bar{w}_j.
	$$
	All functions $r_l$ with $0\le l\le k-1$ are constants on $\widetilde{\omega}$ and therefore this sum consists of $m_k$ equal summands. Hence we have:
	$$
	\Delta_{k,l}h_s^{(\omega)}=\Big(m_k r_k^l\int h_s^{(\omega)}r_k^{-l}\Big)\mathbbm{1}_{\widetilde{\omega}}.
	$$
	Now suppose that 
	$$
	\omega=e_m\cup e_{m+1} \cup \ldots \cup e_n, \quad 0\le m < n \le p_k-1.
	$$
	Then we have
	$$
	(\Delta_{k,l}h_s^{(\omega)})\vert_{e_t}=m_k e^{2\pi i l t/p_k}\sum_{j=m}^n \Big(\int_{e_j} h_s^{(\omega)}\Big) e^{-2\pi i l j/p_k}.
	$$
	Therefore, $Q_{k,s}h_s^{(\omega)}$ on $e_t$ is equal to
	$$
	m_k \sum_{l=p_k-\beta_{ks}}^{p_k-1} e^{2\pi i l t/p_k} \sum_{j=m}^n \Big(\int_{e_j} h_s^{(\omega)}\Big) e^{-2\pi i l j/p_k}.
	$$
	Now we change the order of summation and get:
	\begin{equation}
		\label{qks_1}
		(Q_{k,s}h_s^{(\omega)})|_{e_t}=m_k \sum_{j=m}^n\Big( \int_{e_j} h_{s}^{(\omega)} \Big) \sum_{l=p_k-\beta_{ks}}^{p_k-1} e^{2\pi i l(t-j)/p_k}.
	\end{equation}
	Now we note the following elementary identity which is not very difficult to check:
	$$
	\sum_{l=p-\alpha}^{p-1} e^{2\pi i l t/p}=\frac{1}{2}e^{-\frac{2\pi i \alpha t}{p}} - \frac{1}{2} + \frac{1}{2} i e^{-\frac{2\pi i \alpha t}{p}}\cot\frac{\pi t}{p} - \frac{1}{2}i\cot\frac{\pi t}{p}.
	$$
	This equality holds for every integer numbers $p\ge 2$, $\alpha < p$ and $p \nmid t$. We apply this identity to the right-hand side of the formula \eqref{qks_1} (note that we are interested in values of $Q_{k,s}h_s^{(\omega)}$ only outside $\omega$ and therefore $p_k\nmid (t-j)$) and get that
	\begin{multline*}
		(Q_{k,s}h_s^{(\omega)})|_{e_t}=m_k \sum_{j=m}^n\Big( \int_{e_j} h_{s}^{(\omega)} \Big) \Big( \frac{1}{2} e^{-\frac{2\pi i \beta_{ks}(t-j)}{p_k}}-\frac{1}{2}\\ + \frac{1}{2}i e^{-\frac{2\pi i \beta_{ks}(t-j)}{p_k}} \cot\frac{\pi(t-j)}{p_k}-\frac{1}{2} i \cot\frac{\pi(t-j)}{p_k}  \Big).
	\end{multline*}
	
	We now see that we had to take the numbers $\gamma_{ks}=\beta_{ks}$ in Lemma \ref{Cal-Zyg}. Then by \eqref{cancellation_bad_part} we have:
	\begin{gather}
		\label{cancellation_h}\sum_{j=m}^n \Big( \int_{e_j} h_s^{(\omega)} \Big) = 0;\\
		\label{cancellation_h_rad}\sum_{j=m}^n \Big( \int_{e_j} h_s^{(\omega)} \Big) e^{\frac{2\pi i \beta_{ks} j}{p_k}} = 0.
	\end{gather}
	Thus we can simplify the formula for $(Q_{k,s}h_s^{(\omega)})|_{e_t}$:
	$$
	(Q_{k,s}h_s^{(\omega)})|_{e_t}=\frac{i}{2}m_k \sum_{j=m}^n \Big( \int_{e_j}h_s^{(\omega)} \Big)\Big( e^{-\frac{2\pi i \beta_{ks}(t-j)}{p_k}} \cot\frac{\pi(t-j)}{p_k} - \cot\frac{\pi(t-j)}{p_k}  \Big).
	$$
	Let us introduce the following notation: for a function $g$ on $\omega$ and $e_t\not\subset\omega$ we set
	$$
	(\mathcal{H}_\omega(g))|_{e_t}=m_k \sum_{j=m}^n \Big( \int_{e_j} g \Big) \cot\frac{\pi(t-j)}{p_k}.
	$$
	Using this notation, we rewrite the formula for $Q_{k,s}h_s^{(\omega)}$ (outside $\omega$) once again:
	\begin{equation}
		\label{hilbert_transform}
		Q_{k,s}h_s^{(\omega)}=\frac{i}{2}\Big[ r_k^{-\beta_{ks}}\mathcal{H}_\omega(h_s^{(\omega)}r_k^{\beta_{ks}}) - \mathcal{H}_\omega(h_s^{(\omega)}) \Big].
	\end{equation}
	Now we will estimate the quantity
	$$
	\int_{\widetilde{\omega}\setminus 3\omega}\Big( \sum_{s:k\in\Theta_s} |\mathcal{H}_\omega (h_s^{(\omega)})|^2 \Big)^{1/2}.
	$$
	We will use only the fact that $\int h_s^{(\omega)}=0$ and since the same is true for the function $h_s^{(\omega)}r_k^{\beta_{ks}}$ (see \eqref{cancellation_h_rad}), the estimate of the quantity that appears from another summand in the formula \eqref{hilbert_transform} is similar.
	
	We use the formula \eqref{cancellation_h} and write:
	$$
	\mathcal{H}_\omega (h_s^{(\omega)})|_{e_t} = m_k \sum_{j=m}^n \Big( \int_{e_j} h_s^{(\omega)} \Big) \Big( \cot \frac{\pi(t-j)}{p_k} - \cot\frac{\pi (t-m)}{p_k} \Big).
	$$
	The fact that $e_t\not\subset 3\omega$ implies that for any real numbers $x_1, x_2 \in [m,n]$ we have
	\begin{equation*}
		\frac{\mathrm{dist}_{p_k} (x_1, t)}{p_k} \asymp \frac{\mathrm{dist}_{p_k} (x_2, t)}{p_k} \asymp \frac{\mathrm{dist}_{p_k} (m, t)}{p_k}.
	\end{equation*}
	Here by $\mathrm{dist}_p(a,b)$ we mean the following. We take $c$ such that $p | (c-(a-b))$ and $-p/2 < c\le p/2$. Then $\mathrm{dist}_p(a,b)=|c|$. In other words, for a positive integer $p$ the formula $\frac{\mathrm{dist}_p(a,b)}{p}$ can be understood as the distance between points $e^{2\pi i a/p}$ and $e^{2\pi i b/p}$ on the circle.
	
	Now we estimate the quantity $\mathcal{H}_\omega (h_s^{(\omega)})|_{e_t}$ for $e_t\not\subset 3\omega$ using the mean value theorem for the function $\cot x$:
	\begin{multline*}
		\big|\mathcal{H}_\omega (h_s^{(\omega)})|_{e_t}\big|\lesssim m_k \Big(\int_\omega |h_s^{(\omega)}|\Big) \cdot \frac{n-m}{p_k} \cdot \frac{p_k^2}{(\mathrm{dist}_{p_k}(t,m))^2}\\=\frac{m_{k+1}}{(\mathrm{dist}_{p_k}(t,m))^2}\Big(\int_\omega |h_s^{(\omega)}|\Big) \cdot (n-m).
	\end{multline*}
	Then we have:
	\begin{multline*}
		\int_{\widetilde{\omega}\setminus 3\omega}\Big( \sum_{s:k\in\Theta_s} |\mathcal{H}_\omega (h_s^{(\omega)})|^2 \Big)^{1/2}\\ \lesssim \Big(\sum_{t: e_t\not\subset 3\omega} \Big[ \sum_{s: k\in\Theta_s} \Big( \int_\omega |h_s^{(\omega)}| \Big)^2 \Big]^{1/2} \frac{1}{(\mathrm{dist}_{p_k}(t,m))^2}\Big) (n-m).
	\end{multline*}
	It is easy to see that
	$$
	\sum_{t:e_t\not\subset 3\omega} \frac{1}{(\mathrm{dist}_{p_k}(t,m))^2}\asymp \frac{1}{n-m}.
	$$
	Therefore we have the following estimate:
	$$
	\int_{\widetilde{\omega}\setminus 3\omega}\Big( \sum_{s:k\in\Theta_s} |\mathcal{H}_\omega (h_s^{(\omega)})|^2 \Big)^{1/2}\lesssim  \Big( \sum_{s: k\in\Theta_s} \Big( \int_\omega |h_s^{(\omega)}| \Big)^2 \Big)^{1/2}.
	$$
	It implies the inequality \eqref{est_bad_one_gr} and therefore the proof of	\eqref{main_second} is finished.
	
	It is now clear that the estimate that we proved in this subsection is basically an estimate for functions on just one cyclic group $\mathbb{Z}_{p_k}$. We will use the same scheme (the reduction to an inequality on one cyclic group) in the next section. However, we will use different method, namely, an estimate for the sharp maximal function instead of Calderon--Zygmund decomposition. Once this reduction is done we will repeat some arguments from Rubio de Francia's original paper \cite{RdF} (with certain necessary modifications).
	
	\section{The proof of \eqref{main_fourth}}
	
	\subsection{The definitions of generalized intervals, maximal function and Muckenhoupt weights}
	
	In this section we will use maximal functions and Muckenhoupt weights. Recall the definition of the standard Hardy--Littlewood maximal function (say, on the real line):
	$$
	M_q f(x)=\sup_{I\ni x} \Big( \frac{1}{|I|}\int_I |f(x)|^q\, dx\Big)^{1/q}
	$$
	where the supremum is taken over all intervals which contain the point $x$. However, we should slightly modify this definition in order to make it more convenient for our needs. The following definition is taken from \cite{Young_weights}.
	
	We denote by $\mathscr{F}_k$ the collection of the following subsets of $[0,1]$: $\omega\in\mathscr{F}_k$ if $\omega$ is a proper subset of an element $\widetilde{\omega}\in\widetilde{\F}_k$ and if we consider $\widetilde{\omega}$ as a circle then $\omega$ is a union of some consecutive intervals from $\widetilde{\F}_{k+1}$ (in other words, using notation from the end of Subsection 3.2 we treat the intervals $e_0$ and $e_{p_k-1}$ as adjacent, so for example $e_0\cup e_{p_k-1}$ is a generalized interval). We denote by $\mathscr{F}$ the union $\bigcup_{k\ge 0} \mathscr{F}_k$ and the elements of $\mathscr{F}$ are called the generalized intervals.
	
	Now the modification of the definition of maximal function is straightforward: we should use the generalized intervals instead of the usual ones. That is, from this moment we use the following definition:
	$$
	M_q f(x)=\sup_{\mathscr{F}\ni\omega\ni x} \Big( \frac{1}{|\omega|}\int_\omega |f(x)|^q\, dx\Big)^{1/q}.
	$$
	
	The definition of Muckenhoupt weights now takes the following form: a non-negative function $w$ is in the class $A_p$ for $1<p<\infty$ if
	$$
	\sup_{\omega\in\mathscr{F}} \Big( \frac{1}{|\omega|} \int_\omega w(x)\, dx \Big) \Big( \frac{1}{|\omega|} \int_{\omega} w(x)^{1-p'}\, dx \Big)^{p-1}=[w]_{A_p}<\infty.
	$$
	
	Such maximal functions and Muckenhoupt weights share all usual properties of their ``standard'' counterparts; in particular, the operator $M_q$ is bounded on $L^p(w)$ for $w\in A_{p/q}$ for $p>q$. We refer the reader to the book \cite{GarRubio} for the theory of Muckenhoupt weights. The standard properties of these classes of weights in our context (including the boundedness of the operator $M_q$) are presented in the paper \cite{Young_weights}.

	\subsection{A change of notation}
	
	We now pass to the proof of the inequality
	$$
	\Big\| \Big(\sum_s |P_{\widetilde{J}_{t_s,s}}f|^2\Big)^{1/2}\Big\|_{L^p} \lesssim \|f\|_{L^p},
	$$
	where $\widetilde{J}_{t_s,s}$ are pairwise disjoint intervals of the following form:
	$$
	\widetilde{J}_{t_s,s}\sim 
	\begin{pmatrix}
		m_0 & m_1 & \ldots & m_{t_s-1} & m_{t_s}               & m_{t_s+1} & \ldots & m_N\\
		*   &   * & \ldots & *   & [\alpha_{t_s}+1, \beta_{t_s}-1] & \alpha_{t_s+1} & \ldots & \alpha_N 
	\end{pmatrix}.
	$$
	Let us slightly change the notation. For a number $t$, $0 \le t < N$, we denote by $\Omega_t$ the set of numbers of the form
	$$
	\varkappa=\varkappa_N m_N + \ldots + \varkappa_{t+1} m_{t+1}, \quad 0\le \varkappa_j <p_j.
	$$
	For a fixed $t$ we have a collection of intervals of the following form:
	\begin{equation}
		\label{int_first}
		I_{t,s}^\varkappa \sim
		\begin{pmatrix}
			m_0 & m_1 & \ldots & m_{t-1} & m_{t}               & m_{t+1} & \ldots & m_N\\
			*   &   * & \ldots & *   & [\alpha_{s}, \beta_{s}] & \varkappa_{t+1} & \ldots & \varkappa_N 
		\end{pmatrix}
	\end{equation}
	for different values of $\varkappa\in\Omega_t$ (of course it is possible that no intervals correspond to some values of $t$ and $\varkappa\in\Omega_t$). Here $\alpha_s$ and $\beta_s$ are simply some positive integers such that all these intervals are pairwise disjoint (we do not need their connection to the original numbers $a_s$ and $b_s$). Using the new notation, the desired inequality can be rewritten in a following form:
	\begin{equation}
		\label{main_rewritten}
		\Big\| \Big( \sum_t \sum_{\varkappa\in\Omega_t} \sum_s |P_{I_{t,s}^\varkappa} f|^2 \Big)^{1/2} \Big\|_{L^p} \lesssim \|f\|_{L^p}.
	\end{equation}
	We note that for a fixed $t$ and a fixed $\varkappa\in \Omega_t$ this is equivalent to the original Rubio de Francia's inequality on the one cyclic group $\mathbb{Z}_{p_t}$ (in the same sense as in the subsection 3.3). So, we need to follow the route from the paper \cite{RdF}: first we should refine our intervals in order to get the ``well-distributed family'' and then consider the smooth version of the square function. The fact that we are dealing with the cyclic groups instead of $\R$ makes the notation and computations rather bulky.
	
	\subsection{Refinement of the intervals: reduction to a well-distributed family}
	
	At first we recall the definition of Whitney decomposition of an interval $[a,b]\subset\R$ which is the same as in \cite{RdF}. For the interval $[0,1]$ the collection $W([0,1])$ consists of the following intervals:
	$$
	\Big\{[2^{-(k+1)}/3, 2^{-k}/3] \Big\}_{k=0}^\infty; \ [1/3, 2/3]; \ \Big\{ [1-2^{-k}/3, 1-2^{-(k+1)}/3] \Big\}_{k=0}^\infty.
	$$
	For an arbitrary interval $[a,b]$ we simply transfer this decomposition using the affine mapping between $[0,1]$ and $[a,b]$ and get the collection $W([a,b])$. The advantage of such refinement is that we have $3I\subset [a,b]$ if $I\in W([a,b])$ and $\sum_{J\in W([a,b])} \mathbbm{1}_{2J}(x)\leq 5$ for all $x$.
	
	Now, for each interval $I_{t,s}^\varkappa$ given by the formula \eqref{int_first} we consider the interval
	$$
	\widetilde{I}_{t,s}^\varkappa=\Big(\frac{\alpha_s-1/2}{p_t}, \frac{\beta_s+1/2}{p_t} \Big) \subset [0,1].
	$$
	Note that for fixed numbers $t$ and $\varkappa$ the intervals $\widetilde{I}_{t,s}^\varkappa$ are pairwise disjoint. 
	
	We consider the Whitney decomposition of these intervals. After that, for each interval $\widetilde{I}_{t,s}^{\varkappa, k}\in W(\widetilde{I}_{t,s}^\varkappa)$ we ``transfer it back'' to $\mathbb{Z}_+$ in a clear way: $I_{t,s}^{\varkappa,k}$ consists of all points of the form
	$$
	\begin{pmatrix}
		m_0 & m_1 & \ldots & m_{t-1} & m_{t}               & m_{t+1} & \ldots & m_N\\
		*   &   * & \ldots & *   & j & \varkappa_{t+1} & \ldots & \varkappa_N 
	\end{pmatrix}
	$$
	where $j/p_t\in \widetilde{I}_{t,s}^{\varkappa, k}$. Of course, in this way we get only a finite number of non-empty subintervals of $I_{t,s}^\varkappa$.
	
	We apply the procedure described above to each interval $I_{t,s}^\varkappa$ and get in this way its decomposition:
	$$
	I_{t,s}^\varkappa=I_{t,s}^{\varkappa,1} \cup \ldots \cup I_{t,s}^{\varkappa, k_s}.
	$$
	The following lemma implies that it is enough to estimate the square function with respect to this refined collection of intervals.
	
	\begin{lemma}
		\label{refinement}
		Consider the following functions:
		$$
		Q_{I_{t,s}^\varkappa} f = \Big( \sum_{k=1}^{k_s} |P_{I_{t,s}^{\varkappa,k}} f|^2 \Big)^{1/2}.
		$$ 
		Then the following holds for $1<p<\infty$:
		$$
		\Big\| \Big( \sum_t \sum_{\varkappa\in\Omega_t} \sum_s |P_{I_{t,s}^{\varkappa}} f|^2 \Big)^{1/2} \Big\|_{L^p} \asymp \Big\| \Big( \sum_t \sum_{\varkappa\in\Omega_t} \sum_s |Q_{I_{t,s}^{\varkappa}} f|^2 \Big)^{1/2} \Big\|_{L^p}.
		$$
	\end{lemma}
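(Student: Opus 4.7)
My plan is to adapt the Whitney-refinement argument of Rubio de Francia's Lemma 1.6 from \cite{RdF} to the Vilenkin setting, carried out in three steps: randomize both sides, apply a per-interval Marcinkiewicz-type bound on the Whitney pieces, and use a Fefferman--Stein-style vector-valued upgrade.

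First, Khintchine's inequality applied pointwise gives
\begin{align*}
\mathrm{LHS}^p &\asymp \mathbb{E}_\epsilon \Big\|\sum_{t,\varkappa,s}\epsilon_{t,\varkappa,s}\, P_{I_{t,s}^\varkappa}f\Big\|_{L^p}^p,\\
\mathrm{RHS}^p &\asymp \mathbb{E}_\eta \Big\|\sum_{t,\varkappa,s,k}\eta_{t,\varkappa,s,k}\, P_{I_{t,s}^{\varkappa,k}}f\Big\|_{L^p}^p.
\end{align*}
For each individual $I_{t,s}^\varkappa$ the collection of Whitney pieces $\{I_{t,s}^{\varkappa,k}\}_k$ is two-sided lacunary inside $I_{t,s}^\varkappa$, so the Vilenkin Littlewood--Paley theorem for lacunary decompositions (available in \cite{Young_3}, with constants independent of $\{p_j\}$) yields that the sign-weighted multiplier $\sum_k \tau_k \mathbbm{1}_{I_{t,s}^{\varkappa,k}}$ is bounded on $L^p$ uniformly in $\tau\in\{\pm 1\}^{k_s}$, in the interval, and in $\{p_j\}$.

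To compare the two Rademacher averages, I would write $\eta_{t,\varkappa,s,k} = \epsilon_{t,\varkappa,s}\,\tau_{t,\varkappa,s,k}$ as a product of two independent iid Rademacher families, take the $\tau$-expectation first, and then apply Khintchine in the $\epsilon$-variable in reverse. This converts the RHS average into
$$\mathbb{E}_\tau \Big\|\Big(\sum_{t,\varkappa,s}|U_{t,\varkappa,s,\tau}\, P_{I_{t,s}^\varkappa}f|^2\Big)^{1/2}\Big\|_p^p,$$
where $U_{t,\varkappa,s,\tau}$ denotes the Whitney-Marcinkiewicz multiplier on $I_{t,s}^\varkappa$ with signs $\tau_{t,\varkappa,s,\cdot}$. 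A Fefferman--Stein-style vector-valued extrapolation of the scalar multiplier bound from the previous paragraph would then yield
$$\Big\|\Big(\sum_{t,\varkappa,s}|U_{t,\varkappa,s,\tau}\, P_{I_{t,s}^\varkappa}f|^2\Big)^{1/2}\Big\|_p \asymp \Big\|\Big(\sum_{t,\varkappa,s}|P_{I_{t,s}^\varkappa}f|^2\Big)^{1/2}\Big\|_p$$
uniformly in $\tau$, and integrating in $\tau$ gives the claimed equivalence.

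The main obstacle is the Fefferman--Stein upgrade, which requires a weighted $L^p(w)$ bound for the multipliers $U_{t,\varkappa,s,\tau}$ with $w\in A_p$ uniform in $\tau$, $(t,\varkappa,s)$, and $\{p_j\}$. This is precisely where the Muckenhoupt classes $A_p$ for generalized intervals introduced in Subsection 4.1 (and in \cite{Young_weights}) should enter. The saving structural feature is the ``reduction to one cyclic group'' observation already used in Subsection 3.3: each $I_{t,s}^\varkappa$ sits inside a single $m_{t+1}$-block of $\mathbb{Z}_+$, so its Whitney-Marcinkiewicz multiplier is a scaled copy of a lacunary Littlewood--Paley multiplier on the cyclic group $\mathbb{Z}_{p_t}$, and the required uniform-in-$p_t$ weighted estimate on $\mathbb{Z}_{p_t}$ is accessible through the methods of Young \cite{Young_1, Young_2}.
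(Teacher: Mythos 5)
Your route --- Khintchine, factorization of the random signs, and a Fefferman--Stein-style vector-valued upgrade of the Whitney--Marcinkiewicz multiplier bound --- is in the same spirit as the paper's proof, which instead proves the discrete vector-valued inequality~\eqref{discr_lp} by weighted $L^2(w)$ extrapolation and deduces the two directions of the equivalence by substituting $g_{t,s}^\varkappa = P_{I_{t,s}^\varkappa}f$ and by duality. Both routes funnel into the same technical core, which you correctly isolate: a weighted $L^2(w)$ Littlewood--Paley estimate for the Whitney decomposition of an interval sitting inside one block $\delta_t=[m_t,m_{t+1})$, reduced to an inequality on the single cyclic group $\Z_{p_t}$ with a constant depending only on $[w]_{A_2}$ and \emph{not} on $p_t$.

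There is, however, a genuine gap at exactly this point. First, a minor misattribution: you invoke \cite{Young_3} for the uniform scalar $L^p$ bound of $\sum_k\tau_k\mathbbm{1}_{I_{t,s}^{\varkappa,k}}$, but that paper proves the Vilenkin Littlewood--Paley theorem for the global blocks $[2^k,2^{k+1})$, not for lacunary sub-decompositions inside a single $\delta_t$, which is the situation here; this step is subsumed by the weighted estimate in any case. More seriously, you state that the required $p_t$-uniform weighted Littlewood--Paley bound on $\Z_{p_t}$ is ``accessible through the methods of Young \cite{Young_1,Young_2},'' but those papers supply a Calder\'on--Zygmund decomposition and $H^1$ pointwise-convergence machinery, not a weighted Littlewood--Paley theorem on cyclic groups with $p$-independent $A_2$ constants. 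The paper has to do this work itself: all of Section~5 is devoted to proving the weighted Littlewood--Paley inequality on $\Z_p$, by randomizing, lifting functions on $\Z_p$ to exponentially damped functions on $\Z$, invoking the weighted Littlewood--Paley inequality on $\Z$ (obtained by Berkson--Gillespie transference \cite{BerkGill} from Kurtz's theorem on $\R$ \cite{Kurtz}), and then passing back to $\Z_p$ via a Poisson-kernel approximate identity. Without carrying out this transference argument, or supplying some other direct proof of the $p$-uniform weighted inequality on $\Z_p$, the proposal leaves the central estimate unproved.
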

	
	\begin{proof}[Sketch of proof]
		This lemma is much like \cite[Lemma 2.3]{RdF}. The main idea is to prove the following inequality  for arbitrary functions $g_{t,s}^\varkappa$:
		\begin{equation}
			\label{discr_lp}
			\Big\| \Big( \sum_t \sum_{\varkappa\in\Omega_t} \sum_s |Q_{I_{t,s}^{\varkappa}} g_{t,s}^\varkappa|^2 \Big)^{1/2} \Big\|_{L^p}\lesssim \Big\| \Big( \sum_t \sum_{\varkappa\in\Omega_t} \sum_s |g_{t,s}^\varkappa|^2 \Big)^{1/2} \Big\|_{L^p}.
		\end{equation}
		Substituting $g_{t,s}^\varkappa=P_{I_{t,s}^\varkappa} f$ gives us the inequality ``$\gtrsim$'' in Lemma and the reverse inequality follows by duality.
		
		The estimate \eqref{discr_lp} would follow from the fact that the operators $Q_{I_{t,s}^\varkappa}$ are uniformly bounded in $L^2(w)$ for $w\in A_2$ (this connection between vector-valued and weighted norm inequalities is explained for example in \cite[Section V.6]{GarRubio}; besides that, an elementary proof of the extrapolation theorem for $A_p$ weights and  connection to vector-valued inequalities which works without changes in our context is presented in \cite[Chapters 2--3]{CrMaPe}). Thus, we need to prove the following estimate:
		$$
		\Big\| \Big( \sum_{k=1}^{k_s} |P_{I_{t,s}^{\varkappa,k}} g|^2 \Big)^{1/2} \Big\|_{L^2(w)}\leq C([w]_{A_2}) \|g\|_{L^2(w)}.
		$$
		Each operator $P_{I_{t,s}^{\varkappa, k}}$ can be written as follows:
		$$
		P_{I_{t,s}^{\varkappa, k}} g = w_{\varkappa}P_{I_{t,s}^{0,k}}[w_{\varkappa}^{-1} g].
		$$
		Therefore, we need to prove the following:
		\begin{equation}
			\label{weighted_lp_1}
			\Big\| \Big( \sum_{k=1}^{k_s} |P_{I_{t,s}^{0,k}} h|^2 \Big)^{1/2} \Big\|_{L^2(w)}\leq C([w]_{A_2}) \|h\|_{L^2(w)},
		\end{equation}
		and then apply this inequality to the function $h=w_\varkappa^{-1} g$. The intervals $I_{t,s}^{0,k}$ have the following form:
		$$
		I_{t,s}^{0, k} \sim
		\begin{pmatrix}
			m_0 & m_1 & \ldots & m_{t-1} & m_{t}               & m_{t+1} & \ldots & m_N\\
			*   &   * & \ldots & *   & [\alpha_{s}^{(k)}, \beta_{s}^{(k)}] & 0 & \ldots & 0 
		\end{pmatrix}.
		$$
		It means that the operator $P_{I_{t,s}^{0,k}}$ is a sum of certain operators $\Delta_{t,j}$:
		$$
		P_{I_{t,s}^{0,k}}=\sum_{j=\alpha_s^{(k)}}^{\beta_s^{(k)}} \Delta_{t,j}.
		$$
		Thus if we change $h$ to $\E_{t+1} h$, it does not affect the left-hand side of the inequality \eqref{weighted_lp_1}. Besides that, we have the following estimate:
		$$
		\|\E_{t+1} h\|_{L^2(w)} \leq C([w]_{A_2}) \|h\|_{L^2(w)}.
		$$
		(This inequality is easy to prove on each $e\in\widetilde{\F}_{t+1}$ simply using the definition of the quantity $[w]_{A_2}$.)
		
		It means that in the estimate \eqref{weighted_lp_1} we can assume without loss of generality that $h$ is constant on the segments from $\widetilde{\F}_{t+1}$. 
		
		Our goal is to prove the following inequality for any $\widetilde{\omega}\in\widetilde{\F}_t$:
		\begin{equation}
			\label{lp_cyclic_weights}
			\int_{\widetilde{\omega}}  \Big( \sum_{k=1}^{k_s} |P_{I_{t,s}^{0,k}} h|^2 \Big) w \leq C([w]_{A_2}) \int_{\widetilde{\omega}} |h^2|w.
		\end{equation}
		Suppose that $\widetilde{\omega}$ is a union of the following consecutive segments from $\widetilde{\F}_{t+1}$:
		\begin{equation}
			\label{enum_int}
			\widetilde{\omega}=e_0\cup e_1\cup\ldots\cup e_{p_t-1}.
		\end{equation}
		Then we introduce the weight $v$ on $\mathbb{Z}_{p_t}=\{0, 1, \ldots, p_t-1\}$:
		$$
		v(m)=\int_{e_m} w.
		$$
		It is easy to see that $v$ is an $A_2$ weight on $\mathbb{Z}_{p_t}$ and therefore it enough to prove the inequality \eqref{lp_cyclic_weights} with $w$ replaced by the weight that equals $v(m)$ on $e_m$; since now all our functions in this inequality are constants on the segments from $\widetilde{\F}_{t+1}$, it is easy to see that it is equivalent to a weighted version of the Littlewood--Paley inequality on one cyclic group $\mathbb{Z}_{p_t}$. We postpone the proof of this inequality till the next section; for a moment we just note that such inequality in case of functions on $\R$ is proved in \cite{Kurtz}.
	\end{proof}
	
	We apply this lemma in order to refine each interval in our collection. Now we need to prove the estimate
	\begin{equation}
		\label{main_refined}
		\Big\| \Big( \sum_t \sum_{\varkappa\in\Omega_t} \sum_s \sum_{k=1}^{k_s} |P_{I_{t,s}^{\varkappa,k}} f|^2 \Big)^{1/2} \Big\|_{L^p} \lesssim \|f\|_{L^p}.
	\end{equation}
	
	\subsection{The ``smooth'' operator}
	
	We continue to follow the scheme of proof from the paper \cite{RdF}. The next step is to consider the smooth version of the square function.
	
	Let $\phi \in C_0^\infty (\mathbb{R})$ be an arbitrary function such that $\phi\equiv 1$ on $[-2, 2]$, $\phi \equiv 0$ outside the interval $[-2-1/100, 2+1/100]$ and $0\le \phi(x)\le 1$ for all $x$.
	
	We divide each interval $I_{t,s}^{\varkappa,k}$ into 7 consecutive intervals of almost equal lengths: this is done by dividing each interval $\widetilde{I}_{t,s}^{\varkappa,k}\subset\mathbb{R}$ into 7 equal intervals and then we ``transfer'' these intervals back to $\Z_+$ in the same way as in the previous subsection; we omit the details. Each of the resulting intervals $I_{t,s}^{\varkappa, k; (j)}$, $1\le j\le 7$, satisfies 
	$$
	8\widetilde{I}_{t,s}^{\varkappa, k; (j)}\subset 2 \widetilde{I}_{t,s}^{\varkappa,k}\subset \widetilde{I}_{t,s}^\varkappa
	$$
	(here the second inclusion follows from the properties of Whitney decomposition while the first one is verified by a simple computation: for example, if we divide the segment $J=[-7,7]$ into 7 segments of equal length $J^{(j)}$, $1\le j\le 7$, then $J^{(7)}=[5,7]$ and $8J^{(7)}=[-2,14]$) and it is enough to prove the estimate for the square function with respect to each of the 7 families $\{I_{t,s}^{\varkappa, k; (j)}\}_{t,s, \varkappa, k}$. Therefore (in order to make notation less bulky) we may assume without loss of generality that
	\begin{equation}
		\label{int_1}
		8\widetilde{I}_{t,s}^{\varkappa,k}\subset \widetilde{I}_{t,s}^{\varkappa}
	\end{equation}
	and that for any fixed $t$ and $\varkappa\in\Omega_t$ we have
	\begin{equation}
		\label{int_2}
		\sum_{s, k} \mathbbm{1}_{8\widetilde{I}_{t,s}^{\varkappa,k}}(x)\leq C
	\end{equation}
	for all $x$.
	
	Note that after these procedures certain intervals could become very small. If an interval $\widetilde{I}_{t,s}^{\varkappa,k}$ does not contain points of the form $j/p_t$, then we simply ignore it. For technical reasons, it would be more convenient for us also to deal separately with indices $t, \varkappa, s, k$ such that $\widetilde{I}_{t,s}^{\varkappa, k}$ contains only one point of the form $j/p_t$. One of the ways to do it is to apply the decomposition described in Section 2 to each of such intervals intervals $I_{t,s}^{\varkappa, k}$ once again. It is easy to see that for such particular intervals the inequality \eqref{main_refined} will be reduced to the inequalities of the form \eqref{main_first} and \eqref{main_third} which were already proved. Therefore, we can assume that all our intervals contain at least two points of the form $j/p_t$ and prove our estimate under such assumption.
	
	Now for any fixed $r\in\mathbb{Z}$ we renumber the intervals $\widetilde{I}_{t,s}^{\varkappa,k}$ with lengths between $2^r/p_t$ and $2^{1+r}/p_t$. Note that due to the discussion above we have $r\ge 0$. So, let $\{\widetilde{I}_{t,s}^{\varkappa,r}\}_s$ be the collection of our intervals such that $2^r/p_t\le |\widetilde{I}_{t,s}^{\varkappa,r}| < 2^{1+r}/p_t$. Also let $n_{t,s}^{\varkappa,r}$ be the least positive integer such that $\frac{2^r n_{t,s}^{\varkappa,r}}{p_t}\in \widetilde{I}_{t,s}^{\varkappa,r}$. Now we define 
	$$
	\phi_{t,s}^{\varkappa,r}(x)=\phi(\frac{p_tx}{2^r}-n_{t,s}^{\varkappa, r}).
	$$
	Clearly this function has the following properties:
	\begin{gather}
		\label{smooth_function_1}
		\supp\phi_{t,s}^{\varkappa,r}\subset 8\widetilde{I}_{t,s}^{\varkappa,r};\\
		\label{smooth_function_2}
		\phi_{t,s}^{\varkappa,r} \equiv 1 \ \text{on} \ \widetilde{I}_{t,s}^{\varkappa,r}.
	\end{gather}
	
	Then we should define the ``smooth versions'' of the operators $P_{I_{t,s}^{\varkappa,r}}$. We do it with the help of Vilenkin--Fourier coefficients: if $n=n_0m_0+\ldots + n_tm_t + \varkappa_1$ for $\varkappa_1\neq\varkappa$ then we set $\widehat{Q_{t,s}^{\varkappa,r} f}(n)=0$; for $n=n_0m_0+\ldots + n_tm_t + \varkappa$ we set
	$$
	\widehat{Q_{t,s}^{\varkappa,r} f}(n)=\phi_{t,s}^{\varkappa,r} (\frac{n_t}{p_t}) \widehat{f}(n).
	$$
	
	The following lemma from \cite{Young_2} helps us to replace the operators $P_{I_{t,s}^{\varkappa,r}}$ by $Q_{t,s}^{\varkappa,r}$ in the definition of the square function.
	
	\begin{lemma}
		For arbitrary functions $\{f_l\}$ and arbitrary positive integers $\{j_l\}$ the following inequality holds for $1<p<\infty$:
		$$
		\Big\| \Big( \sum_l |P_{[0,j_l]}f_l|^2\Big)^{1/2} \Big\|_{L^p} \lesssim \Big\| \Big( \sum_l |f_l|^2 \Big)^{1/2} \Big\|_{L^p}.
		$$
	\end{lemma}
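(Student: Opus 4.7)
The plan is to reduce this $\ell^2$-valued inequality to a uniform \emph{scalar} weighted estimate for the partial sum operators and then invoke the standard bridge between weighted and vector-valued norm inequalities that is already recalled in the excerpt.

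First, I would show that the operators $P_{[0,m]}$, $m\in\Z_+$, are uniformly bounded on $L^2(w)$ for every Muckenhoupt weight $w\in A_2$, with constant depending only on $[w]_{A_2}$ (and, crucially, not on $m$ or on the Vilenkin sequence $\{p_j\}$). Writing $m_k\leq m<m_{k+1}$, I split $P_{[0,m]}=\E_k+P_{[m_k,m)}$. The conditional expectation $\E_k$ is trivially bounded on $L^2(w)$ for $w\in A_2$ by the definition of $[w]_{A_2}$ (applied over the segments of $\widetilde{\F}_k$), so the whole issue reduces to the ``fiber'' operator $P_{[m_k,m)}$. This operator is, up to multiplication by the unimodular function $w_{m_k}$, a sum of the form $\sum_{j=\alpha}^{\beta}\Delta_{k,j}$, and the argument developed in Section~3 (reduction to a truncated Hilbert transform on the cyclic group $\Z_{p_k}$ attached to a fiber $\widetilde\omega\in\widetilde{\F}_k$) combined with the weighted boundedness of the discrete Hilbert transform on $\Z_{p_k}$ gives the desired weighted inequality. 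This is precisely the content of Young's weighted paper \cite{Young_weights}, so I would invoke it as a black box.

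Second, with the uniform estimate $\|P_{[0,m]}g\|_{L^2(w)}\leq C([w]_{A_2})\|g\|_{L^2(w)}$ in hand, I would apply the general extrapolation / vector-valued principle: if a family of linear operators $\{T_l\}$ is uniformly bounded on $L^2(w)$ for every $w\in A_2$ with norm controlled by $[w]_{A_2}$, then the $\ell^2$-valued extension $(f_l)_l\mapsto(T_l f_l)_l$ is bounded on $L^p(\ell^2)$ for every $1<p<\infty$. This principle is exactly the one already cited in the proof of Lemma~\ref{refinement}, see \cite[Section~V.6]{GarRubio} and \cite[Chapters~2--3]{CrMaPe}, and the adaptation to the present Vilenkin setting (with the maximal function and $A_p$ classes built on the generalized intervals $\mathscr{F}$) is straightforward. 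Specializing to $T_l=P_{[0,m_l]}$ yields the lemma.

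The main obstacle is the uniform weighted bound in the first step; the vector-valued upgrade in the second step is then automatic. Both ingredients are essentially borrowed from Young's papers (\cite{Young_1,Young_2,Young_weights}), so the proof here should amount to citing those results and observing that everything is independent of the particular integers $m_l$ and of the sequence $\{p_j\}$.
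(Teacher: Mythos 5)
Your proposal is correct but takes a genuinely different route from the paper. The paper does not prove this lemma at all: it quotes it directly from \cite{Young_2}, where it is established by the weak-$(1,1)$/interpolation/duality route built on the $\ell^2$-valued Calder\'on--Zygmund decomposition (the same Lemma~\ref{Cal-Zyg} reproduced in Section~3 above). You instead reduce the statement to a uniform \emph{scalar weighted} estimate $\|P_{[0,m]}g\|_{L^2(w)}\le C([w]_{A_2})\|g\|_{L^2(w)}$ and then upgrade to $L^p(\ell^2)$ by Rubio de Francia extrapolation, exactly the bridge already invoked in the proof of Lemma~\ref{refinement}. The logic of that upgrade is sound: extrapolation applies to the family of pairs $(f_l, P_{[0,m_l]}f_l)$, the operators being different for different $l$ is no obstacle, and the abstract machinery of \cite{CrMaPe} only requires that the maximal operator built on generalized intervals $\mathscr{F}$ be bounded on $L^p(w)$ for $w\in A_p$, which \cite{Young_weights} supplies. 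What your route buys is modularity and a cleaner conceptual picture once the weighted input is granted; what it costs is that you trade one black box (\cite{Young_2}) for another (\cite{Young_weights}). The one point you should pin down is whether the uniform weighted partial-sum estimate in \cite{Young_weights} is stated for \emph{arbitrary} (unbounded) Vilenkin systems with constant depending only on $[w]_{A_2}$ and independent of $\{p_j\}$, since unboundedness of $\{p_j\}$ is precisely what separates this paper from \cite{Tsel}; your sketch (split $P_{[0,m]}=\E_k+P_{[m_k,m)}$, reduce the fiber part to a truncated discrete Hilbert transform on $\Z_{p_k}$, and invoke its weighted bound uniformly in $p$) is the right mechanism and is consistent with the transference developed in Section~5, but as written it rests on a citation you have not verified in detail.
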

	An obvious consequence of this lemma is that for arbitrary intervals $\{I_l\}$ in $\Z_+$ we have
	$$
	\Big\| \Big( \sum_l |P_{I_l}f_l|^2\Big)^{1/2} \Big\|_{L^p} \lesssim \Big\| \Big( \sum_l |f_l|^2 \Big)^{1/2} \Big\|_{L^p}.
	$$

	It follows from \eqref{smooth_function_2} that $P_{I_{t,s}^{\varkappa,r}}Q_{t,s}^{\varkappa,r}=P_{I_{t,s}^{\varkappa,r}}$. Therefore, the above lemma implies the estimate
	$$
	\Big\| \Big( \sum_t \sum_{\varkappa\in\Omega_t} \sum_r \sum_s |P_{I_{t,s}^{\varkappa,r}} f|^2 \Big)^{1/2} \Big\|_{L^p} \lesssim \Big\| \Big( \sum_t \sum_{\varkappa\in\Omega_t} \sum_r \sum_s |Q_{t,s}^{\varkappa,r} f|^2 \Big)^{1/2} \Big\|_{L^p}.
	$$
	Hence it is enough to prove the following estimate for $2\le p< \infty$:
	\begin{equation}
		\label{smooth_op}
		\Big\| \Big( \sum_t \sum_{\varkappa\in\Omega_t} \sum_r \sum_s |Q_{t,s}^{\varkappa,r} f|^2 \Big)^{1/2} \Big\|_{L^p}\lesssim \|f\|_{L^p}.
	\end{equation}
	Note that for $p=2$ this is still an immediate consequence of the Plancherel theorem. Indeed, from \eqref{int_1} and \eqref{smooth_function_1} it follows that for $(t_1,\varkappa_1)\neq (t_2, \varkappa_2)$ the functions $Q_{t_1,s_1}^{\varkappa_1,r_1} f$ and $Q_{t_2,s_2}^{\varkappa_2,r_2} f$ are orthogonal in $L^2$ (we use here that the initial intervals $\widetilde{I}_{t,s}^\varkappa$ which are constructed in the previous subsection are pairwise disjoint) and for fixed $t$ and $\varkappa \in\Omega_t$ we use the property \eqref{int_2}.
	
	It follows from the definition that $Q_{t,s}^{\varkappa,r} f$ can be written as follows:
	$$
	Q_{t,s}^{\varkappa,r} f = w_{\varkappa}\sum_{u\in\mathbb{Z}} \phi_{t,s}^{\varkappa,r}(u/p_t) \Delta_{t,u}[w_{\varkappa}^{-1}f].
	$$
	Here the summation over all $u\in\Z$ is convenient due to technical reasons; note that from  \eqref{int_1} and \eqref{smooth_function_1} it follows that for all $u\not\in \{1, 2, \ldots p_t-1\}$ the corresponding summands are equal to zero and therefore it is not important how we define $\Delta_{t,u}$ for such values of $u$ (although it is natural to think that $\Delta_{t,0}=\E_t$ and $\Delta_{t,u}=\Delta_{t, u\,\mathrm{mod}\, p_t}$).
	
	Let us put 
	$$
	\widetilde{Q}_{t,s}^{\varkappa,r} f = \sum_{u\in\mathbb{Z}} \phi_{t,s}^{\varkappa,r}(u/p_t) \Delta_{t,u}[w_{\varkappa}^{-1}f].
	$$
	Our goal is to prove the inequality \eqref{smooth_op} with $Q_{t,s}^{\varkappa,r}$ replaced by $\widetilde{Q}_{t,s}^{\varkappa,r}$.
	
	Now we will further modify the operators $\widetilde{Q}_{t,s}^{\varkappa,r}$ (in a similar way to \cite{RdF}). The definition implies that the function $\widetilde{Q}_{t,s}^{\varkappa,r}$ is constant on the intervals from $\widetilde{\F}_{t+1}$. Let us take $\widetilde{\omega}\in\widetilde{\F}_t$ and number all intervals from $\widetilde{\F}_{t+1}$ contained in it as in the formula \eqref{enum_int}. Then we put
	$$
	(\mathcal{R}_{t,s}^{\varkappa,r}f)|_{e_n}=e^{-2\pi i 2^r n_{t,s}^{\varkappa,r}n/p_t}\widetilde{Q}_{t,s}^{\varkappa,r}f. 
	$$
	We also introduce the operator $R$ that maps $f$ into an $\ell^2$-valued function in a following way:
	$$
	Rf=(\mathcal{R}_{t,s}^{\varkappa,r}f)_{t,\varkappa,r,s}.
	$$
	We will prove that this is a bounded linear operator from $L^p$ to $L^p(\ell^2)$ for $2<p<\infty$. Note that since $|\mathcal{R}_{t,s}^{\varkappa,r}f|= |\widetilde{Q}_{t,s}^{\varkappa,r} f| = |Q_{t,s}^{\varkappa,r} f|$ the operator $G$ is still $L^2$-bounded.
	
	\subsection{Sharp maximal function: reduction to an estimate on one cyclic group}
	
	For an $\ell^2$-valued function $g$ we define its sharp maximal function:
	$$
	g^{\#}(x)=\sup_{\mathscr{F}\ni\omega\ni x} \Big( \frac{1}{|\omega|} \int_\omega |g(x)-g_\omega|_{\ell^2}\, dx \Big),
	$$ 
	where $g_\omega=\frac{1}{|\omega|}\int_\omega g$. It is well-known (and easy to see) that
	$$
	\sup_{\mathscr{F}\ni\omega\ni x} \inf_{c\in\ell^2} \Big( \frac{1}{|\omega|} \int_\omega |g(x)-c|_{\ell^2}\, dx \Big)\asymp g^\# (x).
	$$
	
	We will show the following pointwise estimate:
	\begin{equation}
		\label{to_prove_maxim_func}
		(Rf)^\# (x)\lesssim (M_2 f)(x).
	\end{equation}
	It implies our inequality because the operator $M_2$ is bounded in $L^p$ for $p>2$ and for the sharp maximal function we have the following lemma.
	
	\begin{lemma}
		\label{sharp_from_below}
		For an $\ell^2$-valued function $g=(g_l)$ we have
		$$
		\int (M_1 g)^p \lesssim \int (g^\#)^p + \Big(\int |g|_{\ell^2}\Big)^p.
		$$
	\end{lemma}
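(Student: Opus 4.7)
The plan is to adapt the standard Fefferman--Stein sharp maximal function theorem to our setting of generalized intervals $\mathscr{F}$. The core of the argument is a good-$\lambda$ inequality
$$
|\{M_1 g > 2\lambda,\ g^\# \leq \gamma\lambda\}| \leq C_0\gamma\, |\{M_1 g > \lambda\}|,
$$
valid for every $\lambda > 0$ and every sufficiently small $\gamma > 0$. To prove it, I would fix $\lambda$ and decompose the open set $\{M_1 g > \lambda\}$ as a disjoint union of maximal generalized intervals $\omega_j \in \mathscr{F}$ on which the average $\frac{1}{|\omega_j|}\int_{\omega_j}|g|_{\ell^2}$ first exceeds $\lambda$ --- a Calderón--Zygmund-type stopping procedure on $\mathscr{F}$ analogous to the one used in Lemma~\ref{Cal-Zyg}. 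Maximality together with the nested structure of the filtration $\widetilde{\F}_k$ ensures that this average stays $\asymp \lambda$ on each $\omega_j$, so the constant $c_j := g_{\omega_j}$ satisfies $|c_j|_{\ell^2} \lesssim \lambda$; moreover, averages of $|g|_{\ell^2}$ over generalized intervals strictly containing $\omega_j$ are at most $\lambda$, so the supremum defining $M_1(g-c_j)$ at any point $x \in \omega_j$ with $M_1 g(x) > 2\lambda$ is attained by some interval $\omega \subseteq \omega_j$. A weak $(1,1)$ bound for $M_1$ restricted to $\omega_j$ then combines with the estimate $\int_{\omega_j}|g-c_j|_{\ell^2} \leq |\omega_j|\, g^\#(x_0)$ for any $x_0 \in \omega_j$ with $g^\#(x_0) \leq \gamma\lambda$ (if no such point exists, $\omega_j$ contributes nothing) to give $|\{x \in \omega_j : M_1 g(x) > 2\lambda,\ g^\#(x) \leq \gamma\lambda\}| \lesssim \gamma|\omega_j|$. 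Summing over $j$ yields the good-$\lambda$ estimate.

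The second step is the standard layer-cake integration of this inequality against the measure $p\lambda^{p-1}\,d\lambda$; it produces
$$
\int (M_1 g)^p \leq 2^p C_0\gamma \int (M_1 g)^p + C_p\gamma^{-p}\int (g^\#)^p.
$$
Choosing $\gamma$ small enough to absorb the first term on the right gives the desired bound, but only once we know a priori that $\int (M_1 g)^p < \infty$. I would ensure this by first truncating to $\min(M_1 g, N)$, running the argument with constants uniform in $N$, and then letting $N \to \infty$ by monotone convergence. The correction term $(\int |g|_{\ell^2})^p$ enters because constant-valued $g$ satisfy $g^\# \equiv 0$ while $M_1 g \not\equiv 0$: writing $g = g_{[0,1]} + (g - g_{[0,1]})$ with mean $g_{[0,1]} = \int_{[0,1]} g$, the sharp function is unchanged under this splitting, the constant piece contributes at most $|g_{[0,1]}|_{\ell^2}^p \leq (\int |g|_{\ell^2})^p$ to $\int (M_1 g)^p$, and the mean-zero remainder is handled by the preceding paragraph.

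The main obstacle I anticipate is carrying out the Calderón--Zygmund stopping decomposition cleanly on $\mathscr{F}$: the generalized intervals are arcs on the ``circles'' $\widetilde{\omega}$ rather than honest intervals, so the standard ``parent interval'' reasoning used to establish $\frac{1}{|\omega_j|}\int_{\omega_j}|g|_{\ell^2} \asymp \lambda$ on each maximal stopping interval (and the complementary inequality on proper ancestors) must be verified for arcs that may wrap around within their parent. However, precisely this kind of machinery for maximal functions and $A_p$ weights on $\mathscr{F}$ is developed in \cite{Young_weights}, cited in the preceding subsection for the $L^p$-boundedness of $M_q$, so the needed ingredients are available off the shelf.
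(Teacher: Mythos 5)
Your route is the standard Fefferman--Stein good-$\lambda$ argument, which is exactly what the paper invokes via the citations to \cite{Young_weights} and \cite{MusSchlag}, and deferring the Calder\'on--Zygmund stopping machinery on $\mathscr{F}$ to Young's paper is the right call. The gap is in how you produce the correction term $\bigl(\int|g|_{\ell^2}\bigr)^p$.

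The good-$\lambda$ inequality as you state it is \emph{not} valid for every $\lambda>0$: on the probability space $[0,1]$ the level set $\{M_1 g>\lambda\}$ can equal all of $[0,1]$ once $\lambda$ drops below a threshold $\lambda_0\asymp\int|g|_{\ell^2}$, and then no ancestor generalized interval with average at most $\lambda$ exists, so the maximality reasoning in your stopping argument (and the bound $|g_{\omega_j}|_{\ell^2}\lesssim\lambda$) breaks down. Your proposed fix of subtracting the global mean $g_{[0,1]}$ does not repair this, because there are non-constant mean-zero functions with $g^\#\equiv 0$ and $M_1 g\not\equiv 0$. Concretely, take $p_0=2$ and let $g$ be the ($\ell^2$-valued, one-component) function $r_0$: then $\int g=0$, and $g$ is constant on every $\omega\in\mathscr{F}$ (indeed $\mathscr{F}_0=\{[0,1/2),[1/2,1)\}$, and every $\omega\in\mathscr{F}_k$ with $k\ge1$ lies inside a single atom of $\widetilde{\F}_1$ on which $r_0$ is constant), so $g^\#\equiv 0$ while $M_1 g\equiv 1$; the claim that ``the mean-zero remainder is handled by the preceding paragraph'' would then give $1=\int(M_1 g)^p\lesssim\int(g^\#)^p=0$. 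The correction term has to enter through the layer-cake integral instead: the good-$\lambda$ inequality is valid only for $\lambda\gtrsim\int|g|_{\ell^2}$ (where the weak-$(1,1)$ bound for $M_1$ guarantees $|\{M_1 g>\lambda\}|<1$, so the stopping decomposition is nontrivial), and the range $\lambda\lesssim\int|g|_{\ell^2}$ contributes $\int_0^{\lambda_0}p\lambda^{p-1}\,d\lambda\lesssim\bigl(\int|g|_{\ell^2}\bigr)^p$ after the crude bound $|\{M_1 g>2\lambda\}|\le1$.
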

	Here $M_1 g = M_1[|g|_{\ell^2}]$.
	
	Such estimates from below for the $L^p$-norm of sharp maximal function are well-known (in a classical setting of $\R^n$ and dyadic maximal functions): see for example \cite[Theorem 7.16]{MusSchlag} or \cite[pp. 153--154]{Stein}; they follow from a so-called good-$\lambda$ inequality. As for our case, the more general version of such estimate is proved in \cite{Young_weights} for the scalar-valued functions. The proof of our statement for $\ell^2$-valued functions is in fact easier: the proof of the standard case should simply be modified by using the appropriate version of Calderon--Zygmund decomposition (the one which was formulated in Lemma~\ref{Cal-Zyg}). 
	
	Now the $L^p$-boundedness of the operator $R$ follows from the estimate $\eqref{to_prove_maxim_func}$ in a straightforward way: we use Lemma \ref{sharp_from_below}, \eqref{to_prove_maxim_func} and the $L^2$-boundedness of the operator $R$ and write
	\begin{multline*}
		\|Rf\|_{L^p(\ell^2)}\leq \|M_1[Rf]\|_{L^p}\lesssim \|(Rf)^\#\|_{L^p} + \|Rf\|_{L^1(\ell^2)}\\ \lesssim \|M_2 f\|_{L^p} + \|Rf\|_{L^2(\ell^2)}\lesssim \|f\|_{L^p} + \|f\|_{L^2} \lesssim \|f\|_{L^p}.
	\end{multline*}
	
	Let us focus on the proof of the inequality \eqref{to_prove_maxim_func}. Fix a generalized interval $\omega\in\mathscr{F}_k$, $\omega\subset \widetilde{\omega}\in\widetilde{\F}_k$. We should prove that for some choise of constants $c_{t,s}^{\varkappa,r}$ the following inequality holds for $x\in\omega$:
	$$
	\frac{1}{|\omega|} \int_\omega \Big( \sum_t \sum_{\varkappa\in\Omega_t} \sum_r\sum_s |\mathcal{R}_{t,s}^{\varkappa,r} f - c_{t,s}^{\varkappa,r}|^2 \Big)^{1/2}\lesssim M_2 f(x).
	$$
	The terms with $t<k$ can be estimated easily: in this case $\mathcal{R}_{t,s}^{\varkappa,r} f$ is constant on $\widetilde{\omega}$ and thus we put $c_{t,s}^{\varkappa,r}=(\mathcal{R}_{t,s}^{\varkappa,r} f)|_{\widetilde{\omega}}$ in order to eliminate these terms.
	
	Now suppose that $t>k$. In this case we put $c_{t,s}^{\varkappa,r}=0$ and use the $L^2$-boundedness of our operator. Indeed, it follows from Lemma~\ref{orthogonality} and the definition that if $e\in\widetilde{\mathcal{F}}_{k+1}$ then $\mathbbm{1}_e Q_{t,s}^{\varkappa,r} f = Q_{t,s}^{\varkappa,r}[\mathbbm{1}_ef]$ and therefore
	$$
	\int_e  \sum_{t>k} \sum_{\varkappa\in\Omega_t} \sum_r\sum_s |Q_{t,s}^{\varkappa,r}f|^2 \lesssim \int_e |f|^2.
	$$
	Hence (it is important to recall here that $|Q_{t,s}^{\varkappa,r} f |=|\mathcal{R}_{t,s}^{\varkappa,r} f|$) we have:
	\begin{multline*}
		\frac{1}{|\omega|} \int_\omega \Big( \sum_{t>k} \sum_{\varkappa\in\Omega_t} \sum_r\sum_s |\mathcal{R}_{t,s}^{\varkappa,r} f|^2 \Big)^{1/2}\leq \Big( \frac{1}{|\omega|} \int_\omega \sum_{t>k} \sum_{\varkappa\in\Omega_t} \sum_r\sum_s |Q_{t,s}^{\varkappa,r}f|^2  \Big)^{1/2}\\ \lesssim \Big( \frac{1}{|\omega|} \int_\omega |f|^2 \Big)^{1/2}\leq M_2 f(x).
	\end{multline*}
	
	Now we only should estimate the terms with $t=k$. That is, our goal is to prove the following inequality:
	\begin{equation}
		\label{to_prove_fin}
		\frac{1}{|\omega|} \int_\omega \Big( \sum_{\varkappa\in\Omega_k} \sum_r \sum_s |\mathcal{R}_{k,s}^{\varkappa,r}f - c_{k,s}^{\varkappa,r}|^2 \Big)^{1/2} \lesssim M_2 f(x).
	\end{equation}
	%Note that the orthogonality arguements as above imply the following inequality:
	%\begin{equation}
	%\label{l2_bound_cycl}
	%\int_{\widetilde{\omega}}  \sum_{\varkappa\in\Omega_k} \sum_r \sum_s |\mathcal{R}_{k,s}^{\varkappa,r}f|^2 \leq \int_{\widetilde{\omega}} |f|^2.
	%\end{equation}
	
	This is basically an estimate for the functions on one cyclic group $\Z_{p_k}$ (in the same sense as in Subsection 3.3). Put $f_\varkappa=P_{J_\varkappa} f$ where $J_\varkappa$ is the following interval:
	$$
	J_\varkappa\sim
	\begin{pmatrix}
		m_0 & m_1 & \ldots & m_{k-1} & m_{k}               & m_{k+1} & \ldots & m_N\\
		*   &   * & \ldots & *   & * & \varkappa_{k+1} & \ldots & \varkappa_N 
	\end{pmatrix}.
	$$ 
	Then $\mathcal{R}_{k,s}^{\varkappa,r} f = \mathcal{R}_{k,s}^{\varkappa,r} f_\varkappa$. We also put $g_\varkappa=w_{\varkappa}^{-1}f_\varkappa$. We again number all segments from $\widetilde{\F}_{k+1}$ which are contained in $\widetilde{\omega}$:
	$$
	\widetilde{\omega}=e_0\cup\ldots\cup e_{p_k-1}.
	$$ 
	The functions $g_\varkappa$ are constants on each of the intervals $e_l$ and therefore we can consider them as functions on $\Z_{p_k}=\{0, 1, \ldots p_k-1\}$. We will write $g_\varkappa (l)$ instead of $g_\varkappa|_{e_l}$. Then we have
	$$
	(\Delta_{k,u} g_{\varkappa})|_{e_n}=\frac{1}{p_k} e^{2\pi i n u/p_k}\sum_{m=0}^{p_k-1} e^{-2\pi i u m/p_k} g_{\varkappa}(m).
	$$
	Therefore we can compute $(\widetilde{Q}_{k,s}^{\varkappa,r} f)|_{e_n}$ in the following way:
	\begin{multline*}
		(\widetilde{Q}_{k,s}^{\varkappa,r} f)|_{e_n}=\frac{1}{p_k}\sum_{m=0}^{p_k-1} g_{\varkappa}(m)\sum_{u\in\Z}\phi_{k,s}^{\varkappa,r}(u/p_k)e^{2\pi i u(n-m)/p_k}\\=\frac{1}{p_k}\sum_{m=0}^{p_k-1}g_\varkappa(m)\sum_{u\in\Z}\phi\big( \frac{u}{2^r}-n_{k,s}^{\varkappa,r}\big) e^{2\pi i u(n-m)/p_k}=\ldots
	\end{multline*}
	We put $l=u-2^r n_{k,s}^{\varkappa,r}$ and continue the computation:
	$$
	\ldots= \frac{1}{p_k}\sum_{m=0}^{p_k-1} g_{\varkappa}(m)\sum_{l\in\Z}\phi\big(\frac{l}{2^r}\big) e^{2\pi i(l+2^r n_{k,s}^{\varkappa,r})(n-m)/p_k}.
	$$
	Now recall that $(\mathcal{R}_{k,s}^{\varkappa,r}f)|_{e_n}=e^{-2\pi i 2^r n_{k,s}^{\varkappa,r}n/p_k}\widetilde{Q}_{k,s}^{\varkappa,r}f$ and hence
	$$
	(\mathcal{R}_{k,s}^{\varkappa,r}f)|_{e_n}=\frac{1}{p_k} \sum_{m=0}^{p_k-1} g_\varkappa(m) e^{-2\pi i 2^r n_{k,s}^{\varkappa,r} m /p_k}\sum_{l\in\Z} \phi\big( \frac{l}{2^r} \big)e^{2\pi i l(n-m)/p_k}.
	$$
	Consider now the following kernel on $\Z_{p_k}\times\Z_{p_k}$:
	$$
	K_{k,s}^{\varkappa,r}(n,m)=\frac{1}{p_k}e^{-2\pi i 2^r n_{k,s}^{\varkappa,r}m/p_k}\sum_{l\in\Z}\phi\big( \frac{l}{2^r}\big)e^{2\pi i l(n-m)/p_k}.
	$$
	Also let $T_{k,s}^{\varkappa,r}$ be the operator which maps a function $g$ on $\Z_{p_k}$ to the following function on $\Z_{p_k}$:
	$$
	T_{k,s}^{\varkappa,r}[g](n)=\sum_{m\in\Z_{p_k}} K_{k,s}^{\varkappa,r}(n,m)g(m).
	$$
	Suppose also that
	$$
	\omega=\bigcup_{q\in I} e_q,
	$$
	where $e_q\in \widetilde{\F}_{k+1}$ are the atoms of $\F_{k+1}$ and $I$ is an arc in $\Z_{p_k}$ (we can consider the elements of $\Z_{p_k}$ as points on the unit circle $\{e^{2\pi i l /p_k}\}$, $0\le l\le p_k-1$). Then the left-hand side of the inequality \eqref{to_prove_fin} can be rewritten in a following way:
	$$
	\frac{1}{|I|}\sum_{z\in I} \Big( \sum_{\varkappa,r,s} |T_{k,s}^{\varkappa,r}[g_\varkappa](z)-c_{k,s}^{\varkappa,r}|^2 \Big)^{1/2}.
	$$
	Here $|I|$ denotes the number of elements in $I$. 
	
	Besides that, the functions $f_{\varkappa_0}$ and $f_{\varkappa_1}$ are orthogonal in $L^2(e_l)$ for each $l$ if $\varkappa_0\neq\varkappa_1$. Therefore we see that 
	$$
	\sum_{\varkappa\in\Omega_k} |g_\varkappa(l)|^2\leq \frac{1}{|e_l|}\int_{e_l} |f|^2
	$$ 
	and hence if $x\in e_n$ then
	$$
	\mathcal{M}_2[(g_\varkappa)_\varkappa](n)\leq M_2f(x).
	$$
	Here on the left $\mathcal{M}_2$ denotes the Hardy--Littlewood maximal function on $\Z_{p_k}$; its definition is self-explanatory.
	
	Now it remains to prove the following inequality for arbitrary arc $I\subset\Z_{p_k}$ and $n\in I$:
	\begin{equation}
		\label{to_prove_cycl}
		\frac{1}{|I|}\sum_{z\in I} \Big( \sum_{\varkappa,r,s} |T_{k,s}^{\varkappa,r}[g_\varkappa](z)-c_{k,s}^{\varkappa,r}|^2 \Big)^{1/2} \lesssim \mathcal{M}_2[(g_\varkappa)_\varkappa](n). 
	\end{equation}
	Here $g_\varkappa$ are arbitrary functions on $\Z_{p_k}$ and we will specify the choice of constants $c_{k,s}^{\varkappa,r}$ later.
	
	Is is important to note that the operator $T$ that acts on $\ell^2$-valued functions on $\Z_{p_k}$ as follows:
	$$
	T[(g_\varkappa)_\varkappa]=(T_{k,s}^{\varkappa,r}g_\varkappa)_{s}^{\varkappa,r}
	$$
	(that is, $T$ maps an $\ell^2$-valued function $(g_\varkappa)$ to an $\ell^2$-valued function indexed by $\varkappa$, $r$ and $s$) is $L^2$-bounded.% It follows from \eqref{l2_bound_cycl}: we can consider each $g_\varkappa$ as a function on $\widetilde{\omega}$, put $f_\varkappa=w_\varkappa g_\varkappa$, $f=\sum_\varkappa f_\varkappa$ and use \eqref{l2_bound_cycl} together with the fact that the functions $f_\varkappa$ are pairwise orthogonal in $L^2(\widetilde{\omega})$.
	
	\subsection{The proof of an estimate on one cyclic group}
	
	Let us prove the inequality \eqref{to_prove_cycl}. It is worth noting that this inequality is an analogue of the estimate for sharp maximal function that was proved in \cite{RdF} (for functions on $\Z_{p_k}$ instead of $\R$). It might be possible that this estimate can be derived from the one in \cite{RdF} by using some sort of transference principle. However, it seems that such reduction of one inequality to another is at least not immediate; besides that, in \cite{RdF} the inequality is proved only for one scalar-valued function instead of the $\ell^2$-valued function $(g_\varkappa)$ (however, it is not a serious obstacle). Thus we choose to adapt the proof from Rubio de Francia's original paper to our context of cyclic groups.
	
	First of all, since now we work with just one group $\Z_{p_k}$, we eliminate the index $k$ from our notation. Also we put $\widetilde{g}_\varkappa = g_\varkappa\mathbbm{1}_{3I}$ where $3I$ is an arc in $\Z_p$ with the same center as $I$ and such that $|3I|=3|I|$ if $|I|\le p/3$ (and if $|I|>p/3$ then $3I=\Z_p$).
	
	We fix arbitrary $n\in I$, put 
	$$
	c_{s}^{\varkappa,r}=\sum_{m\not\in 3I}K_{s}^{\varkappa,r}(n,m)g_\varkappa(m)
	$$
	and write:
	$$
	T_{s}^{\varkappa,r}[g_\varkappa](z)-c_{s}^{\varkappa,r}=T_s^{\varkappa,r}[\widetilde{g}_\varkappa](z)+\sum_{m\not\in 3I}g_\varkappa(m)(K_s^{\varkappa,r}(z,m)-K_s^{\varkappa,r}(n,m)).
	$$
	Let us take arbitrary functions $\xi_s^{\varkappa,r}$ on $\Z_p$ such that $\sum_{\varkappa,r,s}|\xi_s^{\varkappa,r}(z)|^2 \le 1$ for all $z$. Then the left-hand side of the inequality \eqref{to_prove_cycl} can be estimated as follows:
	\begin{multline*}
		\frac{1}{|I|}\sum_{z\in I} \Big( \sum_{\varkappa,r,s} |T_{s}^{\varkappa,r}[g_\varkappa](z)-c_{s}^{\varkappa,r}|^2 \Big)^{1/2}  \leq \frac{1}{|I|} \sum_{z\in I}\Big( \sum_{\varkappa,r,s} |T_s^{\varkappa,r}[\widetilde{g}_\varkappa](z)|^2 \Big)^{1/2} \\+\sup_\xi \frac{1}{|I|}\Big| \sum_{\substack{z\in I\\z\neq n}} \sum_{\varkappa, r, s} \xi_s^{\varkappa,r}(z)\Big( \sum_{m\not\in 3I}g_\varkappa(m)(K_s^{\varkappa,r}(z,m)-K_s^{\varkappa,t}(n,m)) \Big) \Big|=: (I)+(II).
	\end{multline*}
	
	The estimate of quantity $(I)$ is easy: we simply use Cauchy--Schwarz inequality and $L^2$-boundedness of operator $T$:
	\begin{multline*}
		\frac{1}{|I|} \sum_{z\in I}\Big( \sum_{\varkappa,r,s} |T_s^{\varkappa,r}[\widetilde{g}_\varkappa](z)|^2 \Big)^{1/2} \leq \Big( \frac{1}{|I|}\sum_{z\in I}\sum_{\varkappa,r,s} |T_s^{\varkappa,r}[\widetilde{g}_\varkappa](z)|^2 \Big)^{1/2}\\ \leq   \Big( \frac{1}{|I|}\sum_{z\in \Z_p}\sum_{\varkappa,r,s} |T_s^{\varkappa,r}[\widetilde{g}_\varkappa](z)|^2 \Big)^{1/2}\lesssim \Big( \frac{1}{|I|} \sum_{z\in\Z_p}\sum_\varkappa |\widetilde{g}_\varkappa(z)|^2 \Big)^{1/2}\\=\Big( \frac{1}{|I|} \sum_{z\in 3I}\sum_\varkappa |g_\varkappa(z)|^2 \Big)^{1/2}\leq \sqrt{3}\mathcal{M}_2[(g_\varkappa)_\varkappa](n).
	\end{multline*}
	
	Let us now prove the bound for the quantity $(II)$ for each $\xi$. We write:
	\begin{multline*}
		\frac{1}{|I|}\Big| \sum_{\substack{z\in I\\z\neq n}} \sum_{\varkappa, r, s} \xi_s^{\varkappa,r}(z)\Big( \sum_{m\not\in 3I}g_\varkappa(m)(K_s^{\varkappa,r}(z,m)-K_s^{\varkappa,r}(n,m)) \Big) \Big| \\ \leq \frac{1}{|I|} \sum_{\substack{z\in I\\z\neq n}} \sum_{m\not\in 3I} \sum_{\varkappa} \Big( |g_\varkappa(m)| \Big| \sum_{r,s} \xi_s^{\varkappa,r}(z)(K_s^{\varkappa,r}(z,m)-K_s^{\varkappa,r}(n,m)) \Big| \Big)\leq \ldots.
	\end{multline*}
	For $x, z\in \Z_p$ we denote
	$$
	I_k(x,z)=\{y\in \Z_p:\ 2^k\dist_p(x,z) < \dist_p(y,z)\le 2^{k+1}\dist_p(x,z) \}.
	$$
	Recall that $\dist_p$ is a distance on $\Z_p$ that is defined as follows: for $x,y\in \Z_p$, we consider an integer $c$ such that $c\equiv x-y$ modulo $p$ and $|c|\le p/2$; then $\dist_p(x,y)=|c|$.
	
	Using this notation, we continue the estimate:
	\begin{multline*}
		\ldots\leq\frac{1}{|I|} \sum_{\substack{z\in I\\z\neq n}} \sum_{k\ge 1} \sum_{m\in I_k(n,z)} \sum_{\varkappa} \Big( |g_\varkappa(m)| \Big| \sum_{r,s} \xi_s^{\varkappa,r}(z)(K_s^{\varkappa,r}(z,m)-K_s^{\varkappa,r}(n,m)) \Big| \Big) \\ \leq \frac{1}{|I|} \sum_{\substack{z\in I\\z\neq n}} \sum_{k\ge 1} \sum_\varkappa \bigg[ \Big( \sum_{m\in I_k(n,z)} |g_\varkappa(m)|^2 \Big)^{1/2} \\ \times \Big( \sum_{m\in I_k(n,z)} \Big| \sum_{r,s} \xi_s^{\varkappa,r}(z)(K_s^{\varkappa,r}(z,m) - K_s^{\varkappa,r}(n,m)) \Big|^2 \Big)^{1/2} \bigg]\leq\ldots
	\end{multline*}
	
	Now we need the following lemma.
	
	\begin{lemma}\label{main_est_kernel}
		There exist real numbers $\alpha>1$ and $A>0$ such that for any $\varkappa$ and any numbers $\lambda_s^r$ we have
		\begin{equation}
			\label{key_est_kernel}
			\sum_{y\in I_k(x,z)} \Big| \sum_{r,s} \lambda_s^r (K_s^{\varkappa,r}(z,y)-K_s^{\varkappa,r}(x,y)) \Big|^2 \leq \frac{A^2 2^{-\alpha k} \sum_{r,s}|\lambda_s^r|^2}{\dist_p(x,z)}.
		\end{equation}
	\end{lemma}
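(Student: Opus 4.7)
The first move is to unpack the kernel into an ``envelope'' times a ``modulation''. Setting
\[
\psi_r(t) := \sum_{l\in\Z}\phi(l/2^r)\,e^{2\pi i l t/p},
\]
one has
\[
K_{s}^{\varkappa,r}(n,m)=\frac{1}{p}\,e^{-2\pi i 2^r n_{s}^{\varkappa,r}m/p}\,\psi_r(n-m),
\]
so in particular
\[
K_{s}^{\varkappa,r}(z,y)-K_{s}^{\varkappa,r}(x,y)=\frac{e^{-2\pi i 2^r n_{s}^{\varkappa,r}y/p}}{p}\bigl(\psi_r(z-y)-\psi_r(x-y)\bigr).
\]
Poisson summation (together with the fact that $\phi\in C_0^\infty$) identifies $\psi_r$ with the periodization of the Schwartz bump $t\mapsto 2^r\check\phi(2^rt/p)$, which yields the derivative bounds
\[
|\psi_r^{(j)}(t)|\lesssim_{N,j}\frac{2^{(j+1)r}}{p^j}\Bigl(1+\frac{2^r\dist_p(t,0)}{p}\Bigr)^{-N}\qquad(j,N\ge0).
\]
Combined with the mean value theorem, this gives the pointwise bound valid for $y\in I_k(x,z)$:
\[
\bigl|K_{s}^{\varkappa,r}(z,y)-K_{s}^{\varkappa,r}(x,y)\bigr|\lesssim \frac{2^r}{p}\min(1,u_r)(1+2^k u_r)^{-N},\qquad u_r:=\frac{2^r\dist_p(x,z)}{p}.
\]

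The second observation is a Fourier almost-orthogonality. The Fourier transform in $y$ of $K_{s}^{\varkappa,r}(z,\cdot)-K_{s}^{\varkappa,r}(x,\cdot)$ is supported in $\{|\xi+2^r n_s^{\varkappa,r}|\lesssim 2^r\}$, which after rescaling by $1/p$ is (up to a fixed dilation) a copy of the Whitney interval $-\widetilde I_{t,s}^{\varkappa,r}$. The bounded-overlap property \eqref{int_2} therefore tells us that the family $\{\widehat{g_{r,s}}\}_{r,s}$, where $g_{r,s}(y):=K_{s}^{\varkappa,r}(z,y)-K_{s}^{\varkappa,r}(x,y)$, has bounded overlap of Fourier supports uniformly over all $(r,s)$.

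I would then prove \eqref{key_est_kernel} by a Schur/Cotlar--Stein argument. Expanding the square gives
\[
\sum_{y\in I_k(x,z)}\Big|\sum_{r,s}\lambda_s^r g_{r,s}(y)\Big|^2=\sum_{(r_1,s_1),(r_2,s_2)}\lambda_{s_1}^{r_1}\overline{\lambda_{s_2}^{r_2}}\langle g_{r_1,s_1},g_{r_2,s_2}\rangle_{I_k(x,z)},
\]
with the cross-inner-product given explicitly by
\[
\langle g_{r_1,s_1},g_{r_2,s_2}\rangle_{I_k(x,z)}=\frac{1}{p^2}\sum_{y\in I_k(x,z)} e^{-2\pi i\omega y/p}\tilde\psi_{r_1}(y)\overline{\tilde\psi_{r_2}(y)}, \quad \omega=2^{r_1}n_{s_1}^{\varkappa,r_1}-2^{r_2}n_{s_2}^{\varkappa,r_2},
\]
where $\tilde\psi_r(y):=\psi_r(z-y)-\psi_r(x-y)$. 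Diagonal terms are bounded via the pointwise estimate from the first paragraph; a direct dyadic bookkeeping of the function $u\mapsto u^2\min(1,u)^2(1+2^ku)^{-2N}$ shows that the dominant scale is $u_r\sim 2^{-k}$ and produces a bound of the shape $A^2 2^{-\alpha k}/\dist_p(x,z)$ with $\alpha$ strictly greater than $1$. Off-diagonal terms either vanish identically (because the corresponding Fourier supports are disjoint, so only $O(1)$ pairs with overlapping supports survive by the bounded-overlap property) or are estimated by summation by parts in the oscillatory factor $e^{-2\pi i\omega y/p}$, exploiting smoothness of $\tilde\psi_{r_1}\overline{\tilde\psi_{r_2}}$ to gain an extra factor of $p/|\omega|$.

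The hardest part of the argument is precisely this balancing act: the pointwise decay alone gives the $2^{-\alpha k}$ factor but loses $1/\dist_p(x,z)$, while a Plancherel/Parseval argument on the full group $\mathbb{Z}_p$ gives the $1/\dist_p(x,z)$ factor but loses $2^{-\alpha k}$. One must track the interplay between (a) the Whitney scale $r$, (b) the modulation frequency $2^rn_s^{\varkappa,r}$, and (c) the annular scale $2^k\dist_p(x,z)$ so that both gains survive simultaneously. The bounded-overlap identification of Fourier supports with (enlargements of) the Whitney intervals $\widetilde I_{t,s}^{\varkappa,r}$ is what makes such simultaneous control possible, and once it is in place the proof closes with standard dyadic calculus as in \cite{RdF}.
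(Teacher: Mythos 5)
Your proposal shares the paper's starting point (the factorization $K_s^{\varkappa,r}(n,m)=p^{-1}e^{-2\pi i 2^r n_s^{\varkappa,r}m/p}\psi_r(n-m)$ and Poisson-summation bounds on $\psi_r$), but then it diverges: the paper does \emph{not} expand the square and run a Cotlar--Stein argument. Instead, it takes the square root, applies the triangle inequality in the index $r$ only, and for each fixed $r$ bounds $\sum_{y\in I_k(x,z)}|\sum_s \lambda_s^r e^{-2\pi i 2^r n_s^r y/p}|^2$ by a short elementary lemma exploiting the arithmetic fact that the frequencies $2^r n_s^r$ are distinct multiples of $2^r$, hence the exponentials are essentially orthogonal on arcs of length $[p/2^r]$. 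That lemma is what the paper uses in place of your almost-orthogonality step, and it controls the $s$-sum in $\ell^2$ with no loss. The $r$-sum is then closed with a two-scale comparison of the derivative bound and the decay bound for $\psi_r$, split at the threshold $2^r=2^{-2k/3}p/\dist_p(x,z)$, producing $\alpha=5/3$.

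The central gap in your argument is in the treatment of the off-diagonal terms. You claim they ``vanish identically (because the corresponding Fourier supports are disjoint)''. This is false once you localize to $I_k(x,z)$: the quantity $\langle g_{r_1,s_1},g_{r_2,s_2}\rangle_{I_k(x,z)}$ is an inner product against the indicator of an arc, which smears Fourier support, so disjointness of $\widehat{g_{r_1,s_1}}$ and $\widehat{g_{r_2,s_2}}$ on $\Z_p$ does not make these restricted inner products zero. You do mention a fallback via summation by parts, but you never show that it yields a bound that is summable over all $(r_2,s_2)$ for fixed $(r_1,s_1)$ with enough decay to recover both the $2^{-\alpha k}$ and the $1/\dist_p(x,z)$ factor (in particular, for $r_1=r_2=r$ there may be many values of $s$, and the oscillation you gain scales like $p/|\omega|$ with $\omega$ possibly as small as $2^r$; you'd need to check that the resulting sum converges to something of the claimed size). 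The paper's route is strictly more elementary precisely because it avoids all such cross-terms: triangle inequality kills the $r\times r'$ cross-terms, and the explicit exponential-sum lemma handles the $s\times s'$ ones exactly. Finally, your assertion that ``direct dyadic bookkeeping \dots produces a bound of the shape $A^2 2^{-\alpha k}/\dist_p(x,z)$ with $\alpha$ strictly greater than $1$'' is asserted rather than shown; the paper's threshold $H=2^{-2k/3}p/\dist_p(x,z)$ and the resulting $\alpha=5/3$ require an actual computation balancing $2^{2r}\dist_p(x,z)/p$ against $2^{-r-2k}p^2/\dist_p(x,z)^2$, which you would need to carry out.
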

	
	We will prove this lemma in the next subsection. Now let assume that it holds and continue our estimate.
	\begin{multline*}
		\cdots\leq \frac{1}{|I|} \sum_{\substack{z\in I\\z\neq n}} \sum_{k\ge 1} \sum_\varkappa \bigg[ \Big( \sum_{m\in I_{k}(n,z)} |g_\varkappa(m)|^2 \Big)^{1/2} \Big( A^2 \frac{2^{-\alpha k} \sum_{r,s}|\xi_s^{\varkappa, r}(z)|^2}{\dist_p(n,z)} \Big)^{1/2}  \bigg] \\ \leq \frac{A}{|I|} \sum_{\substack{z\in I\\z\neq n}}\sum_{k\ge 1} \bigg[ \Big( \sum_{m\in I_k(n,z)} \sum_\varkappa |g_\varkappa(m)|^2 \Big)^{1/2} \Big( \frac{2^{-\alpha k} \sum_{\varkappa,r,s}|\xi_s^{\varkappa,r}(z)|^2}{\dist_p(n,z)} \Big)^{1/2} \bigg] \\ \leq\frac{A}{|I|} \sum_{\substack{z\in I\\z\neq n}} \sum_{k\ge 1} \frac{2^{-\alpha k/2}}{\dist_p(n,z)^{1/2}} \Big( \sum_{m\in I_k(n,z)} \sum_\varkappa |g_\varkappa(m)|^2 \Big)^{1/2}\lesssim \ldots
	\end{multline*}
	In order to pass to the last line we used that $\sum_{\varkappa,r,s}|\xi_s^{\varkappa,r}(z)|^2 \le 1$ for all $z$. 
	
	Note that $I_k(n,z)\subset\{m:\, \dist_p(m,z)\leq 2^{k+1}\dist_p(n,z)\}$ and this last set is an arc in $\Z_p$ that contains $n$ and consists of $\asymp 2^k\dist_p(n,z)$ elements. Therefore we have
	$$
	\Big( \sum_{m\in I_k(n,z)} \sum_\varkappa |g_\varkappa(m)|^2 \Big)^{1/2}\lesssim 2^{k/2}\dist_p(n,z)^{1/2}\mathcal{M}_2[(g_\varkappa)_\varkappa](n).
	$$
	Using this inequality, we finish our estimate:
	$$
	\ldots\lesssim \frac{A}{|I|}\sum_{z\in I} \sum_{k\ge 1} 2^{(1-\alpha)k/2}\mathcal{M}_2[(g_\varkappa)_\varkappa](n)\lesssim \mathcal{M}_2[(g_\varkappa)_\varkappa](n).
	$$
	
	\subsection{The proof of lemma \ref{main_est_kernel} (the basic estimate on the kernel)}
	
	It remains only to prove lemma \ref{main_est_kernel}. Since we need to prove the inequality for each $\varkappa$, we omit index $\varkappa$ in all our notation. Without loss of generality we can assume that $\sum_{r,s}|\lambda_r^s|^2=1$.
	
	Recall the definition of $K_s^r$:
	$$
	K_s^r(x,y)=\frac{1}{p}e^{-2\pi i 2^r n_s^r m/p}\sum_{l\in\Z}\phi\big( \frac{l}{2^r} \big)e^{2\pi i l (x-y)/p},
	$$
	where $x, y\in\Z_p$.
	Denote 
	$$
	\psi_r(t)=\sum_{l\in\Z}\phi\big(\frac{l}{2^r}\big)e^{2\pi i l t/p}.
	$$
	In this formula $t$ can be an arbitrary real number. However, since obviously we have $\psi_r(t)=\psi_r(t+p)$, we can substitute the elements of $\Z_p$ in the place of $t$ in this formula. 
	
	We estimate the square root of the left-hand side in the formula \eqref{key_est_kernel} as follows:
	\begin{multline*}
		\Big(\sum_{y\in I_k(x,z)} \Big| \sum_{r,s} \lambda_s^r (K_s^{r}(z,y)-K_s^{r}(x,y)) \Big|^2\Big)^{1/2}\\=\frac{1}{p}\Big( \sum_{y\in I_k(x,z)} \Big| \sum_r \Big[ (\psi_r(x-y)-\psi_r(z-y)) \Big( \sum_s \lambda_s^r e^{-2\pi i 2^r n_s^r y/p} \Big)\Big] \Big|^2 \Big)^{1/2}\\ \leq \frac{1}{p} \sum_r \Big( \sum_{y\in I_k(x,z)} \Big[ |\psi_r(x-y)-\psi_r(z-y)|^2 \Big| \sum_s \lambda_s^r e^{-2\pi i 2^r n_s^r y/p} \Big|^2 \Big] \Big)^{1/2}\\ \leq \frac{1}{p} \sum_r \Big[ \sup_{t\in I_k(x,z)} |\psi_r(x-t)-\psi_r(z-t)|\Big( \sum_{y\in I_k(x,z)} \Big| \sum_s \lambda_s^r e^{-2\pi i 2^r n_s^r y/p} \Big|^2 \Big)^{1/2}\Big]\lesssim \ldots
	\end{multline*}
	
	Let us now estimate the sum over $y\in I_k(x,z)$ in this expression. Recall that for a fixed $r$ the numbers $\{n_s^r\}$ are pairwise different and $0<n_s^r<p/2^r$ (it follows directly from the definition of these numbers in subsection 4.4). It is convenient for us to divide the family of kernels $\{K_s^r\}$ into two families: with $0<n_s^r \le p/2^{1+r}$ and with $p/2^{1+r}<n_s^r<p/2^r$. Clearly, we can do it since it is enough to prove lemma \ref{main_est_kernel} for each of these families. Then we apply the following elementary lemma.
	
	\begin{lemma}
		Suppose that $a, b\in\Z$ and $[a,b]\subset\Z$ contains $[\frac{p}{2^r}]$ integer points \emph($[x]$ denotes the integer part of $x$\emph). Then 
		$$
		\sum_{y=a}^b \Big| \sum_{0<j\le\frac{p}{2^{r+1}}} \lambda_j e^{-2\pi i 2^r j y/p} \Big|^2 \leq C \frac{p}{2^{1+r}} \sum_{0<j\le\frac{p}{2^{r+1}}} |\lambda_j|^2.
		$$
	\end{lemma}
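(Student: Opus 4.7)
The plan is to recognize the claim as a discrete large sieve inequality for the $\delta$-separated frequencies $\xi_j := 2^r j/p \in (0, 1/2]$, $1 \le j \le J := [p/2^{r+1}]$ (with $\delta := 2^r/p$), sampled on an interval of length $N := b - a + 1 = [p/2^r]$. Crucially, $N\delta \asymp 1$, so $\delta^{-1} \asymp N \asymp p/2^r$, and this balance is precisely what makes the claimed constant work.

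First, pass to the dual formulation. Writing $M_{y,j} := e^{-2\pi i \xi_j y}$, one has $\|M\| = \|M^*\|$, so the lemma is equivalent to
$$
\sum_{j=1}^J \Bigl| \sum_{y=a}^b c_y e^{2\pi i \xi_j y} \Bigr|^2 \lesssim \frac{p}{2^r} \sum_{y=a}^b |c_y|^2
$$
for arbitrary $(c_y)$. A naive Schur-test applied to $M^*M$, using the explicit geometric sum $(M^*M)_{j_1,j_2} = \sum_y e^{2\pi i (\xi_{j_1} - \xi_{j_2})y}$ together with $|\sin \pi x| \ge 2|x|$, would lose a logarithm and is therefore too crude; genuine cancellation in the off-diagonal entries must be exploited.

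Second, remove the log via Gallagher's comparison argument. Set $T(\alpha) := \sum_y c_y e^{2\pi i y\alpha}$. For any $\alpha \in I_j := (\xi_j - \delta/2, \xi_j + \delta/2)$, the fundamental theorem of calculus and Cauchy--Schwarz give
$$
|T(\xi_j) - T(\alpha)|^2 \le \frac{\delta}{2} \int_{I_j} |T'|^2,
$$
whence $|T(\xi_j)|^2 \le 2|T(\alpha)|^2 + \delta \int_{I_j}|T'|^2$. Averaging over $\alpha \in I_j$ and summing over $j$, using the pairwise disjointness of the $I_j$ (by $\delta$-separation), yields
$$
\sum_{j=1}^J |T(\xi_j)|^2 \le \frac{2}{\delta}\int_\T |T|^2 + \delta \int_\T |T'|^2.
$$
By Plancherel, $\int_\T |T|^2 = \sum_y |c_y|^2$, and after a harmless translation centering $[a,b]$ about $0$ (which only multiplies $T$ by a unimodular phase), $\int_\T |T'|^2 \le \pi^2 N^2 \sum_y |c_y|^2$. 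Since $\delta^{-1} \asymp \delta N^2 \asymp p/2^r$, both terms on the right are $\lesssim (p/2^r) \sum |c_y|^2$, which finishes the proof of the dual inequality and hence the lemma.

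The main obstacle is conceptual rather than technical: one must identify the correct Fourier-analytic tool (Gallagher-type mean-value comparison, or equivalently a Beurling--Selberg majorant for $\mathbf{1}_{[a,b]}$ with narrow Fourier support) in order to exploit the cancellation for nearby frequencies. Once that is done, the computation is routine, and the factor $p/2^{1+r}$ on the right-hand side reflects exactly the balance $N\delta \asymp 1$.
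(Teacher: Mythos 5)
Your proof is correct, but it follows a genuinely different route from the paper's. The paper attacks the bilinear form directly: it expands the square, bounds $|\lambda_j\bar\lambda_m|$ by $\tfrac12(|\lambda_j|^2+|\lambda_m|^2)$, and then shows that every off-diagonal geometric sum $\sum_{y=a}^b e^{2\pi i 2^r(m-j)y/p}$ has modulus at most $1$. The latter is where the hypothesis that $[a,b]$ has exactly $[p/2^r]$ integer points is used crucially: writing $p=2^r g+u$ with $0\le u<2^r$, the sum telescopes to $\bigl|\tfrac{e^{2\pi i(m-j)u/p}-1}{e^{2\pi i(m-j)2^r/p}-1}\bigr|$, and since $u<2^r$ and $|m-j|<p/2^{r+1}$ both the numerator and denominator angles lie in $[0,\pi/2)$, with the numerator angle smaller, so the ratio is $\le 1$. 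This gives the off-diagonal contribution $\lesssim J\sum_j|\lambda_j|^2$ with no logarithm. By contrast, you dualize and invoke the Gallagher mean-value comparison (a classical large sieve device), which also avoids the logarithm but by a different mechanism: local averaging of $|T|^2$ over $\delta$-intervals plus Plancherel and a Bernstein-type bound on $T'$. Your route is more robust --- it works for any interval of length $N\lesssim p/2^r$ and does not exploit the exact cancellation when $N=[p/2^r]$ --- whereas the paper's is more elementary and self-contained (no Fourier analysis on $\T$, no $T'$, no centering), but relies on the exact interval length. One minor point worth stating explicitly in your write-up: when you center $[a,b]$ about $0$, take the shift $c$ to be an integer near $(a+b)/2$ so that $T$ remains a genuine trigonometric polynomial on $\T$ with integer frequencies in $[-N,N]$; then $\int_\T|T'|^2\le 4\pi^2 N^2\sum_y|c_y|^2$ by Parseval, and the stated bound follows.
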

	
	\begin{proof}
		We have:
		\begin{multline*}
			\sum_{y=a}^b \Big| \sum_{0<j\le\frac{p}{2^{r+1}}} \lambda_j e^{-2\pi i 2^r j y/p} \Big|^2 = \sum_{y=a}^b \Big( \sum_j |\lambda_j|^2 + \sum_{m}\sum_{j\neq m} \lambda_j\bar{\lambda}_k e^{2\pi i 2^r(m-j)y/p} \Big) \\ \lesssim \Big(\frac{p}{2^{1+r}} \sum_j |\lambda_j|^2\Big) + \sum_m \sum_{j\neq m} (|\lambda_m|^2 + |\lambda_j|^2) \Big| \sum_{y=a}^b e^{2\pi i 2^r(m-j)y/p} \Big|.
		\end{multline*}
		
		The lemma will be proved once we show that the following inequality holds:
		$$
		\Big| \sum_{y=a}^b e^{2\pi i 2^r(m-j)y/p} \Big|\leq 1.
		$$
		Let $p=2^r g + u$, $0\le u <2^r$. Then 
		\begin{multline*}
			\Big| \sum_{y=a}^b e^{2\pi i 2^r(m-j)y/p} \Big|= 	\Big| \sum_{y=0}^{g-1} e^{2\pi i 2^r(m-j)y/p} \Big|\\ =\Big| \frac{e^{2\pi i (m-j)\frac{2^r g}{2^r g + u}}-1}{e^{2\pi i (m-j)\frac{2^r}{2^rg+u}}-1} \Big|=\Big| \frac{e^{2\pi i (m-j)\frac{u}{p}}-1}{e^{2\pi i (m-j)\frac{2^r}{p}}-1} \Big|.
		\end{multline*}
		This quantity is indeed not greater than 1 since $|m-j|<\frac{p}{2^{1+r}}$.
		
	\end{proof}
	
	Of course, the same inequality as in this lemma holds true if we change summation over $\{j:\, 0<j\le\frac{p}{2^{r+1}} \}$ to summation over the set $\{j:\, \frac{p}{2^{1+r}} < j < \frac{p}{2^r}\}$. Thus we can apply this lemma to estimate our sum over $y\in I_k(x,z)$ (by dividing $I_k(x,z)$ into several intervals with lengths $[\frac{p}{2^r}]$ and one interval with smaller length):
	$$
	\sum_{y\in I_k(x,z)} \Big| \sum_s \lambda_s^r e^{-2\pi i 2^r n_s^r y/p} \Big|^2 \lesssim 2^k\dist_p(x,z)+\frac{p}{2^r}.
	$$
	Note that we used here our assumption that $\sum |\lambda_s^r|^2=1$. Using this bound, we continue our estimate:
	\begin{equation}
		\label{FIN}
		\ldots \leq \frac{1}{p} \sum_r (2^k \dist_p(x,z)+\frac{p}{2^r})^{1/2}\sup_{t\in I_k(x,z)} |\psi_r(x-t)-\psi_r(z-t)|.
	\end{equation}
	Our next goal is to prove two different estimates for the supremum in the above formula. The first one is easy: we simply compute the derivative of the function $\psi_r$ and apply the mean value theorem. We have:
	$$
	\psi_r'(t)=\sum_{l\in\Z}\frac{2\pi i l}{p}\phi\big(\frac{l}{2^r}\big)e^{2\pi i l t/p}.
	$$
	All terms with $l\not\in [-3\cdot 2^r, 3\cdot 2^r]$ in this sum are equal to zero. Therefore, we have the following inequality:
	$$
	|\psi'_r(t)|\leq \frac{1}{p} \sum_{l=-3\cdot 2^r}^{3\cdot 2^r} |2\pi i l|\lesssim \frac{2^{2r}}{p}.
	$$
	Hence we see that 
	\begin{equation}
		\label{first_smooth_func}
		\sup_{t\in I_k(x,z)} |\psi_r(x-t)-\psi_r(z-t)|\lesssim \frac{2^{2r}}{p}\dist_p(x,z).
	\end{equation}
	
	Now let us estimate the quantity $|\psi_r(t)|$. Put $\pphi_r(u)=\phi(u/2^r)$. Then we apply the Poisson summation formula: 
	$$
	\psi_r(t)=\sum_{l\in\Z}\pphi_r(l)e^{2\pi i l t/p}=\sum_{m\in\Z}\pphi_r^\vee\big( \frac{t}{p} +m \big).
	$$
	Here $\pphi_r^\vee$ is a usual inverse Fourier transform of a function on $\R$:
	$$
	|\pphi_r^\vee (\xi)|=|2^r \phi^\vee(2^r\xi)|\lesssim \frac{2^r}{1+2^{2r}\xi^2}.
	$$
	Thus we have:
	$$
	|\psi_r(t)|\lesssim \sum_{m\in\Z}\frac{2^r}{1+2^{2r}(\frac{t}{p}+m)^2}\leq 2^{-r}\sum_{m\in\Z} \frac{1}{(m+\frac{t}{p})^2}\lesssim 2^{-r}\frac{p^2}{\dist_p(t,0)^2}.
	$$
	Since for each $t\in I_k(x,z)$ we have $\dist_p(x,t)\asymp\dist_p(z,t)\asymp 2^k \dist_p(x,z)$, the following inequality follows from here:
	\begin{equation}
		\label{second_smooth_func}
		\sup_{t\in I_k(x,z)} |\psi_r(x-t)-\psi_r(z-t)|\lesssim 2^{-r-2k}\frac{p^2}{\dist_p(x,z)^2}.
	\end{equation}
	
	In order to finish the estimate of the quantity \eqref{FIN}, we simply compare two inequalities \eqref{first_smooth_func} and \eqref{second_smooth_func}. For the summands with $r$ such that $$2^r\le 2^{-2k/3}\frac{p}{\dist_p(x,z)}=:H$$ we use the inequality \eqref{first_smooth_func}:
	\begin{multline*}
		\frac{1}{p} \sum_{r: 2^r\le H} (2^k \dist_p(x,z)+\frac{p}{2^r})^{1/2}\sup_{t\in I_k(x,z)} |\psi_r(x-t)-\psi_r(z-t)|\\
		\lesssim \frac{1}{p}\sum_{r: 2^r\le H} (2^{k/2}\dist_p(x,z)^{1/2}+2^{-r/2}p^{1/2})\frac{2^{2r}}{p}\dist_p(x,z)\\ \lesssim \frac{1}{p^2}2^{k/2} \dist_p(x,z)^{3/2}H^2 + \frac{1}{p^{3/2}}\dist_p(x,z)H^{3/2}=\frac{2^{-5k/6}+2^{-k}}{\dist_p(x,z)^{1/2}}.
	\end{multline*}
	To estimate the summands with $2^r>H$ we use the inequality \eqref{second_smooth_func}:
	\begin{multline*}
		\frac{1}{p} \sum_{r: 2^r> H} (2^k \dist_p(x,z)+\frac{p}{2^r})^{1/2}\sup_{t\in I_k(x,z)} |\psi_r(x-t)-\psi_r(z-t)|\\ \lesssim \frac{1}{p}\sum_{r: 2^r > H} (2^{k/2}\dist_p(x,z)^{1/2}+2^{-r/2}p^{1/2})2^{-r-2k}\frac{p^2}{\dist_p(x,z)^2}\\ \lesssim p\, \dist_p(x,z)^{-3/2} 2^{-3k/2} H^{-1} + p^{3/2} \dist_p(x,z)^{-2}2^{-2k}H^{-3/2}=\frac{2^{-5k/6}+2^{-k}}{\dist_p(x,z)^{1/2}}.
	\end{multline*}
	
	It remains to collect the estimates and conclude that the desired inequality \eqref{key_est_kernel} is proved with $\alpha=5/3$.
	
	\section{Weighted Littlewood--Paley inequality on cyclic groups}
	
	In this section we will prove the weighted version of Littlewood--Paley inequality for cyclic groups which we used in the proof of Lemma \ref{refinement}. We start with an exact formulation. We consider the group $\Z_p$ with the standard counting measure on it. For a function $f$ on $\Z_p$ and $E\subset\Z_p$ we denote by $P_E$ the following operator: $P_E f=(\mathbbm{1}_E\widehat{f})^\vee$.
	
	\begin{theorem}
		Suppose that $p\in\Z$, $p\ge 2$ and $h$ is an arbitrary function on $\Z_p$. Let $v\in A_2(\Z_p)$ be a Muckenhoupt weight on $\Z_p$. Suppose that $\{\widetilde{I}_s\}$ is a collection of lacunary subintervals of $(0,1)$ and $I_s$ are corresponding ``arcs'' in $\Z_p$: $j\in I_s$ if and only if $\frac{j}{p}\in\widetilde{I}_s$. Then 
		\begin{equation}
			\label{weighted_zp}
			\sum_{l\in\Z_p} \sum_s |(P_{I_s}h)(l)|^2 v(l)\leq C([v]_{A_2})\sum_{l\in\Z_p} |h(l)|^2 v(l).
		\end{equation}
	\end{theorem}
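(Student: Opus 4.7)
The plan is to prove the weighted square-function bound by smoothing the projectors and then running a vector-valued Calder\'on--Zygmund argument on the space of homogeneous type $(\Z_p, \dist_p)$ equipped with counting measure.

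First, I would smooth each projector: choose functions $\phi_s \in C^\infty((0,1))$ with $\phi_s \equiv 1$ on $\widetilde{I}_s$, supported in a modest lacunary enlargement $\widetilde{I}_s^*$ (so that the family $\widetilde{I}_s^*$ still has bounded overlap), and satisfying $\|\phi_s^{(\ell)}\|_\infty \lesssim |\widetilde{I}_s|^{-\ell}$ for $\ell=0,1$. Define smoothed multipliers $Q_s$ on $\Z_p$ by $\widehat{Q_s h}(j) = \phi_s(j/p)\widehat{h}(j)$. Since $\phi_s \equiv 1$ on $\widetilde{I}_s$ we have $P_{I_s} = P_{I_s}\,Q_s$. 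The weighted version of the vector-valued inequality from the lemma of \cite{Young_2} cited earlier in this paper --- obtained from the $L^p(w)$-boundedness of the single-interval projectors in \cite{Young_weights} together with weighted extrapolation \cite{CrMaPe} --- then reduces the desired inequality to the analogous weighted estimate for the smoothed operators $Q_s$.

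Second, I would view $h \mapsto (Q_s h)_s$ as a vector-valued convolution operator on $\Z_p$ with kernel $\mathbf{K}(l) = (K_s(l))_s$, where
$$K_s(l) = \frac{1}{p}\sum_{j=0}^{p-1}\phi_s(j/p)\,e^{2\pi i j l/p}.$$
Poisson summation represents $K_s(l)$ as $\sum_{m\in\Z}\widehat{\phi_s}(l+mp)$, and the smoothness and compact support of $\phi_s$ then yield rapid decay of each $K_s$ at the natural scale $p/|\widetilde{I}_s|$. Combined with the lacunarity of the sequence $\{|\widetilde{I}_s|\}$, this gives the pointwise estimates $|\mathbf{K}(l)|_{\ell^2} \lesssim \dist_p(l,0)^{-1}$ together with the H\"ormander-type smoothness bound
$$\sum_{\dist_p(l,0)\,\ge\,2|l'|} |\mathbf{K}(l) - \mathbf{K}(l-l')|_{\ell^2} \lesssim 1.$$
The $L^2 \to L^2(\ell^2)$ boundedness is immediate from Parseval and $\sum_s |\phi_s|^2 \lesssim 1$. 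Hence the operator $h\mapsto (Q_s h)_s$ is a vector-valued Calder\'on--Zygmund operator on $(\Z_p, \dist_p, \#)$ with constants independent of $p$ and of the lacunary family, and the standard $A_2$ theory (\cite{GarRubio}) then yields the desired $L^2(v)$-bound with constant depending only on $[v]_{A_2}$.

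The main obstacle is the kernel analysis: establishing the H\"ormander-type smoothness bound uniformly in $p$ and in the lacunary family $\{\widetilde{I}_s\}$. Three ingredients must fit together --- the bounded overlap of the $\widetilde{I}_s^*$ to pack the $\ell^2$-sum, Poisson summation plus the smoothness of $\phi_s$ to give rapid decay of $K_s$ at scale $p/|\widetilde{I}_s|$, and a geometric sum over the lacunary scales to close the estimate. This is essentially the same type of computation as in the subsection devoted to the basic kernel estimate of the smooth operator $T$ above, so the argument can be organized to reuse that kernel analysis with only minor modifications.
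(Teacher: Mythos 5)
Your proof takes a genuinely different route from the paper's. The paper proves the weighted inequality on $\Z_p$ by transference: it randomizes the square function with signs $\eps_s$ and averages, dualizes, lifts $h$, $g$, $v$ from $\Z_p$ to $\Z$ with exponential damping $e^{-\delta|m|}$, invokes the weighted Littlewood--Paley inequality on $\Z$ (obtained from Kurtz's theorem on $\R$ via Berkson--Gillespie transference \cite{BerkGill}, \cite{Kurtz}), and then passes to the limit $\delta\to 0$ using the observation that the normalized square of the Poisson kernel $\frac{1-e^{-2\delta}}{1+e^{-2\delta}}Q_\delta^2$ is an approximate identity. In contrast, you work directly on $\Z_p$ as a space of homogeneous type: you smooth the projectors, realize $h\mapsto(Q_s h)_s$ as a vector-valued Calder\'on--Zygmund operator with kernel $\mathbf{K}=(K_s)_s$, establish the size bound $|\mathbf{K}(l)|_{\ell^2}\lesssim\dist_p(l,0)^{-1}$ and the H\"ormander condition using Poisson summation, the smoothness of $\phi_s$, and lacunarity of the scales, and then invoke standard $A_2$ theory for vector-valued CZ operators. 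Both routes are valid; the transference route is shorter because it outsources all kernel analysis to known results on $\R$ and $\Z$, while yours is more self-contained and deliberately parallels the kernel work already carried out for Lemma~\ref{main_est_kernel} in Subsection~4.6, which makes the two parts of the paper more uniform in method.

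Two small points worth tightening. First, the decay scale: Poisson summation gives $K_s(l)=\sum_m\widehat{\phi_s}(l+mp)$ and, since $\phi_s$ lives on an interval of length $\sim|\widetilde{I}_s|$ with $\|\phi_s^{(\ell)}\|_\infty\lesssim|\widetilde{I}_s|^{-\ell}$, the bound is $|\widehat{\phi_s}(\xi)|\lesssim|\widetilde{I}_s|(1+|\widetilde{I}_s|\,|\xi|)^{-\ell}$, so $K_s$ decays at scale $|\widetilde{I}_s|^{-1}$ (not $p/|\widetilde{I}_s|$); also, to run the H\"ormander estimate you will want $\ell\ge 2$ rather than just $\ell=0,1$, which costs nothing since $\phi_s$ is smooth. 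Second, your reduction $P_{I_s}=P_{I_s}Q_s$ requires the weighted vector-valued bound for the sharp arc-projections $P_{I_s}$ on $\Z_p$ with $A_2$-constant uniform in $p$; you cite the Vilenkin references \cite{Young_2}, \cite{Young_weights}, but what is actually needed is the analogous (scalar, then extrapolated) statement on the cyclic group, which is a separate fact that must itself be supplied, either by transference or by the same CZ argument applied to a single kernel. Neither point undermines the approach, but both should be spelled out in a complete write-up.
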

	
	It is important here that the constant $C([v]_{A_2})$ does not depend on $p$ but depends only on the quantity $[v]_{A_2}$. The definition of Muckenhoupt weights on the group $\Z_p$ is natural: $v\in A_2$ if
	$$
	\sup_{I\subset\Z_p} \Big(\frac{\sum_{l\in I} v(l)}{|I|}\Big) \Big(\frac{\sum_{l\in I} v(l)^{-1}}{|I|} \Big)=[v]_{A_2}<\infty,
	$$
	where the supremum is taken over all ``arcs'' in $\Z_p$ (as always, it is helpful to consider the elements of $\Z_p$ as points $e^{2\pi i j/p}$ on the unit circle). 
	
	It is easy to check that  this theorem implies the inequality \eqref{lp_cyclic_weights} (if we take into account the discussion after this inequality).
	
	Of course, since the weighted version of Littlewood--Paley inequality on $\R$ is known (it is proved in \cite{Kurtz}), the proof of this theorem is basically an exercise in application of the transference principle (a standard reference for some general results concerning transference is \cite{CoifWeiss}). However, we are in a slightly non-standard setting of cyclic groups and Muckenhoupt weights on them and so it is difficult to find an exact reference for the transference result we need. This is why we choose to briefly explain how to prove this theorem.
	
	\begin{proof}[Proof of Theorem 2]
		We will explain how to derive the inequality \eqref{weighted_zp} from the corresponding inequality for functions on $\Z$. First of all, we note that it is enough to prove the following estimate for arbitrary choise of signs $\eps_s=\pm 1$:
		$$
		\sum_{l\in\Z_p} \Big| \sum_s \eps_s (P_{I_s}h)(l) \Big|^2 v(l)\lesssim \sum_{l\in\Z_p} |h(l)|^2 v(l).
		$$
		The initial inequality would follow from this one if we average over all choises of $\eps_s=\pm 1$. 
		
		The second step is to linearize our estimate: due to the standard duality arguement, it is enough to prove that 
		$$
		\Big| \sum_{l\in\Z_p} g(l)\Big( \sum_s \eps_s(P_{I_s} h)(l)\Big) v(l) \Big| \lesssim \Big( \sum_{l\in\Z_p} |h(l)|^2 v(l) \Big)^{1/2} \Big( \sum_{l\in\Z_p} |g(l)|^2 v(l)\Big)^{1/2},
		$$ 
		where $g$ is an arbitrary function on $\Z_p$.
		
		Let us define the counterparts of our functions on $\Z$. For $n=mp+l\in\Z$, where $0\le l\le p-1$ we set 
		$$
		w(n)=v(l), \ H_\delta(n)=e^{-\delta|m|}h(l), \ G_\delta(n)=e^{-\delta|m|}g(l).
		$$
		Here we identified the elements of $\Z_p$ with the elements of the set $\{0, 1, \ldots, p-1\}$. We will also do so further. It is easy to see that $w\in A_2(\Z)$ and the quantity $[w]_{A_2(\Z)}$ is controlled by $[v]_{A_2(\Z_p)}$.
		
		Let us use the following inequality:
		\begin{multline}
			\label{lp_on_z}
			\Big|\sum_{n\in\Z}G_\delta(n) \Big( \eps_s \sum_s (P_{\widetilde{I}_s}H_\delta)(n) \Big)w(n)\Big| \\ \lesssim \Big( \sum_{n\in\Z} |H_\delta (n)|^2 w(n) \Big)^{1/2}  \Big( \sum_{n\in\Z} |G_\delta(n)|^2 w(n) \Big)^{1/2}.
		\end{multline}
		Here $P_{\widetilde{I}_s}$ is a Fourier multiplier with the symbol $\mathbbm{1}_{\widetilde{I}_s}$ which acts on the functions on $\Z$.
		
		This is a weighted version of Littlewood--Paley inequality for functions on $\Z$. It follows from the same inequality for the functions on $\R$ with the help of the transference method presented in \cite{BerkGill}.
		
		It is easy to compute the right-hand side in the inequality \eqref{lp_on_z}. We have:
		$$
		\sum_{n\in\Z} |H_\delta (n)|^2 w(n) = \sum_{l=0}^{p-1} \sum_{m\in\Z} e^{-2\delta |m|} |h(l)|^2 v(l)=\frac{1+e^{-2\delta}}{1-e^{-2\delta}} \sum_{l=0}^{p-1} |h(l)|^2 v(l).
		$$
		Therefore, the right-hand side of the inequality \eqref{lp_on_z} equals
		$$
		\frac{1+e^{-2\delta}}{1-e^{-2\delta}} \|h\|_{L^2(\Z_p; v)} \|g\|_{L^2(\Z_p; v)}.
		$$
		Now let us compute the left-hand side. We introduce the following notation:
		$$
		Q_\delta(\theta) = \sum_{n\in\Z} e^{-\delta |n|} e^{2\pi i n \theta}=\frac{1-r^2}{1-2e^{-\delta}\cos 2\pi\theta + e^{-2\delta}}.
		$$
		This is a Poisson kernel.
		
		We need to compute the Fourier transform of $H_\delta$ (which is a function on $\T$). It is done in a similar way as above:
		$$
		\widehat{H}_\delta (t)=\sum_{a\in\Z} e^{-2\pi i a t}H_\delta (a)=\sum_{k=0}^{p-1}\sum_{b\in\Z} e^{-2\pi i (bp+k)t} e^{-\delta |b|}h(k)=Q_\delta (pt) \sum_{k=0}^{p-1} e^{-2\pi i k t} h(k).
		$$
		Hence we have the following formula:
		$$
		(P_{\widetilde{I}_s}H_\delta)(n)=\int_{\widetilde{I}_s} Q_\delta(pt)e^{2\pi i n t}\sum_{k=0}^{p-1} e^{-2\pi i k t} h(k)\, dt.
		$$
		Finally, we arrive at the following equation for the left-hand side of \eqref{lp_on_z}:
		\begin{multline*}
			\Big|\sum_{n\in\Z}G_\delta(n) \Big(  \sum_s \eps_s (P_{\widetilde{I}_s}H_\delta)(n) \Big)w(n)\Big|\\=\Big| \sum_s \eps_s \sum_{l=0}^{p-1}  \sum_{m\in\Z} e^{-\delta |m|} g(l) v(l) \int_{\widetilde{I}_s} Q_\delta (pt) e^{2\pi i (mp+l) t} \sum_{k=0}^{p-1} e^{-2\pi i k t} h(k)\, dt  \Big|\\ = \Big| \sum_s \eps_s \sum_{l=0}^{p-1} \sum_{k=0}^{p-1} h(k) g(l) v(l) \int_{\widetilde{I}_s} Q_\delta(pt) \sum_{m\in\Z}e^{-\delta|m|} e^{2\pi i m p t} e^{2\pi i (l-k)t}\, dt \Big| \\ = \Big| \sum_s \eps_s \sum_{l=0}^{p-1} \sum_{k=0}^{p-1} h(k) g(l) v(l) \int_{\widetilde{I}_s} Q_\delta(pt)^2 e^{2\pi i (l-k) t} dt \Big| \\ = \frac{1}{p} \Big| \sum_s\eps_s \sum_{k=0}^{p-1} h(k) g(l) v(l) \int_{p\widetilde{I}_s} Q_\delta(x)^2 e^{2\pi i (l-k)x/p} dx \Big|.
		\end{multline*}
		
		It remains to note that 
		$$
		\int_0^1 |Q_\delta (x)|^2=\frac{1+e^{-2\delta}}{1-e^{-2\delta}}
		$$
		(it is a consequence of the Plancherel identity and the definition of $Q_\delta$) and hence the function
		$$
		\frac{1-e^{-2\delta}}{1+e^{-2\delta}}Q_\delta(x)^2
		$$
		is an approximate identity. Therefore, if we divide both sides in the formula \eqref{lp_on_z} by $	\frac{1+e^{-2\delta}}{1-e^{-2\delta}}$ and tend $\delta$ to zero, we get the following inequality:
		$$
		\frac{1}{p} \Big| \sum_s \eps_s \sum_{l\in\Z_p} \sum_{k\in\Z_p} \sum_{j\in I_s} h(k) g(l) e^{2\pi i (l-k)j/p}\Big| \lesssim \|h\|_{L^2(\Z_p; v)} \|g\|_{L^2(\Z_p; v)}.
		$$
		This is the same as the inequality \eqref{weighted_zp} and so the theorem is proved.
		
	\end{proof}
	
	\textbf{Acknowledgments.} This work was supported by the Ministry of Science and Higher Education of the Russian Federation, agreement no. 075-15-2022-289.
	
	The author is kindly grateful to S. V. Kislyakov for posing this problem. The author is also grateful to N. Osipov and V. Borovitskii for valuable discussions and to anonymous referees for comments which improved the presentation.


\begin{thebibliography}{99}
		\bibitem{BerkGill} {E}. {B}erkson, {T}. {A}. {G}illespie, \emph{Multipliers for weighted $L^p$-spaces, transference, and the $q$-variation of functions}, Bull. Sci. Math. \textbf{122}, 427--454 (1998)
		
		\bibitem{Bour} {J}. {B}ourgain, \emph{On square functions on the trigonometric system}, Bull. Soc. Math. Belg., \textbf{37}:1, 20--26 (1985)
		
		\bibitem{CoifWeiss} {R}. {R}. {C}oifman, {G}. {W}eiss, \emph{Transference methods in analysis}, CBMS American Mathematical Society 31. Providence, RI: American Mathematical Society (1977)
		
		\bibitem{CrMaPe} {D}. {V}. {C}ruz-{U}ribe, {J}. {M}. {M}artell and C. P\'{e}rez, \emph{Weights, Extrapolation and the Theory of Rubio de Francia}, Oper. Theory Adv. Appl. \textbf{215}, Birkhäuser/Springer Basel AG, Basel (2011)
		
		\bibitem{GarRubio} J. Garcia-Cuerva, J. L. Rubio De Francia, \emph{Weighted norm inequalities and related topics}, North-Holland
		Math. Stud., vol. 116. Notas. Math., vol. 104, North-Holland, Amsterdam (1985)
		
		\bibitem{Cl_Graf} L. Grafakos, \emph{Classical Fourier Analysis}. Third edition. Springer (2014)
		
		\bibitem{KisLP} S. V. Kislyakov, D. V. Parilov, \emph{On the Littlewood--Paley theorem for arbitrary intervals}, Zap. Nauchn. Sem. POMI, \textbf{327} (2005), 98--114; J. Math. Sci., \textbf{139}:2, 6417--6424 (2006)
		
		\bibitem{Lacey} M. Lacey, \emph{    Issues related to Rubio de Francia's Littlewood-Paley inequality}, New York Journal of Mathematics. NYJM Monographs, \textbf{2} (2007)
		
		\bibitem{Kurtz} D. S. Kurtz, \emph{Littlewood--Paley and multiplier theorems on weighted $L^p$ spaces}, Trans. Amer. Math. Soc., \textbf{259}, 235--254 (1980)
		
		\bibitem{L-P} Littlewood, J. E. and Paley, R. E. A. C., \emph{Theorems on {F}ourier series and power series ({II})}, Proc. London Math. Soc. (2), \textbf{42}, 52--89 (1936)
		
		\bibitem{MusSchlag}Camil Muscalu and Wilhelm Schlag, \emph{Classical and multilinear harmonic analysis. Vol. I}, Cambridge Studies in Advanced Mathematics, vol. 137, Cambridge University Press, Cambridge (2013)
		
		\bibitem{Osip} N.~N.~Osipov, \emph{Littlewood--Paley--Rubio de Francia inequality for the Walsh system}, Algebra i Analiz, \textbf{28}:5, 236--246 (2016)
		
		\bibitem{RdF}Jos\'{e} L. Rubio de Francia, \emph{A Littlewood--Paley inequality for arbitrary intervals}, Rev. Mat. Iberoamericana \textbf{1}, 1--14 (1985)
		
		\bibitem{Stein} E.~Stein, \emph{Harmonic analysis: real-variable methods, orthogonality, and oscillatory integrals}, Princeton Mathematical Series, vol. 43, Princeton University Press, Princeton, NJ, 1993, With the
		assistance of Timothy S. Murphy, Monographs in Harmonic Analysis, III. MR 123219
		
		\bibitem{Tsel} A.~S.~Tselishchev, \emph{A Littlewood-Paley-Rubio de Francia inequality for bounded Vilenkin systems}, Sb. Math., \textbf{212}:10, 1491--1502 (2021)
		
		\bibitem{Vil} N. Vilenkin, \emph{On a class of complete orthonormal systems}, Izv. Akad. Nauk SSSR Ser. Mat., \textbf{11}:4, 363--400 (in Russian) (1947)
		
		\bibitem{Wat} Chinami Watari, \emph{On generalized Walsh Fourier series}, Tohoku Math. J. (2), \textbf{10}, 211--241 (1958)
		
		\bibitem{Weisz} Ferenc Weisz, \emph{Martingale Hardy spaces and their applications in Fourier analysis}, Lecture Notes in Mathematics \textbf{1958}, Springer-Verlag, Berlin (1994)
		
		\bibitem{Young_1} Wo-Sang Young, \emph{Mean convergence of generalized {W}alsh-{F}ourier series}, Trans. Amer. Math. Soc., \textbf{218}, 311--320 (1976)
		
		\bibitem{Young_2} Wo-Sang Young, \emph{Almost everywhere convergence of {V}ilenkin-{F}ourier series of {$H^1$} functions}, Proc. Amer. Math. Soc., \textbf{108}:2, 433--441 (1990)
		
		\bibitem{Young_weights} Wo-Sang Young, \emph{Weighted norm inequalities for Vilenkin-Fourier series}, Trans. Amer. Math. Soc., \textbf{340}:1, 273--291 (1993)
		
		\bibitem{Young_3} Wo-Sang Young, \emph{Littlewood--Paley and multiplier theorems for Vilenkin--Fourier series}, Canad. J. Math., \textbf{46}:3, 662--672 (1994)
	\end{thebibliography}
\end{document}